\font\tencmmib=cmmib10 \skewchar\tencmmib '60
\def\lessim{\ \lower4pt\hbox{$
\buildrel{\displaystyle <}\over\sim$}\ }
\def\gessim{\ \lower4pt\hbox{$\buildrel{\displaystyle >}
\over\sim$}\ }
\def\Var{\mathop{\rm Var}\nolimits}
\def\Cov{\mathop{\rm Cov}\nolimits}
\def\d{\mathrm{d}}
\def\<{\langle}
\def\>{\rangle}
\newcommand{\N}{\ensuremath{\mathbb{N}}}
\newcommand{\R}{\ensuremath{\mathbb{R}}}
\newcommand{\e}{\ensuremath{\mathbb{E}}}
\newcommand{\Pro}{\ensuremath{\mathbb{P}}}
\newcommand{\indi}{\ensuremath{\boldsymbol 1}}
\newcommand{\Crt}{\mathop{\mathrm{Crt}}\nolimits}
\font\tencmmib=cmmib10 \skewchar\tencmmib '60
\def\lessim{\ \lower4pt\hbox{$
\buildrel{\displaystyle <}\over\sim$}\ }
\def\gessim{\ \lower4pt\hbox{$\buildrel{\displaystyle >}
\over\sim$}\ }
\newcommand{\p}{\mathbb{P}}
\newtheorem{lemma}{\bf Lemma}
\newtheorem{theorem}{\bf Theorem}
\newtheorem{corollary}{\bf Corollary}
\newtheorem{remark}{\bf Remark}
\newenvironment{Proof of lemma}{\noindent{\bf Proof of Lemma}}{\hfill$\Box$\newline}
\newenvironment{Proof of theorem}{\noindent{\bf Proof of Theorem}}{\hfill{\footnotesize${\square}$}\newline}
\newenvironment{Proof of theorems}{\noindent{\bf Proof of Theorems}}{\hfill$\Box$\newline}
\newenvironment{Proof of proposition}{\noindent{\bf Proof of Proposition}}{\hfill$\Box$\newline}
\newenvironment{Proof of propositions}{\noindent{\bf Proof of Propositions}}{\hfill$\Box$\newline}
\newenvironment{Proof of exercise}{\noindent{\it Proof of Exercise:}}{\hfill$\Box$}
\font\tencmmib=cmmib10 \skewchar\tencmmib '60
\def\lessim{\ \lower4pt\hbox{$
\buildrel{\displaystyle <}\over\sim$}\ }
\def\gessim{\ \lower4pt\hbox{$\buildrel{\displaystyle >}
\over\sim$}\ }
\def\go0{\to 0}
\def\leftitem#1{\item{\hbox to\parindent{\enspace#1\hfill}}}
\def\sg{\sigma}
\def\sg2{\sigma^2}
\def\__{_{\infty}}
\begin{document}

\title{Free energy and complexity of spherical bipartite models}

\author{Antonio Auffinger  \thanks{auffing@math.uchicago.edu} \\ \small{University of Chicago}\and Wei-Kuo Chen \thanks{wkchen@math.uchicago.edu} \\ \small{University of Chicago} }
\maketitle

\begin{abstract}
We investigate both free energy and complexity of the spherical bipartite spin glass model. We first prove a variational formula in high temperature for the limiting free energy based on the well-known Crisanti-Sommers representation of the mixed $p$-spin spherical model. Next, we show that the mean number of local minima at low levels of energy is exponentially large in the size of the system and we derive a bound on the location of the ground state energy.
\end{abstract}

\section{Introduction and main results}

In recent years, spin glass theory has been successfully extended beyond the area of mathematics and mathematical physics, for instance, to the study of social and neural networks \cite{Agliari, Barra0, Dodds, ParisiNet} or theoretical biology and computer science \cite{Amit, Barra2, Barra1, Martelli}. 
Motivated by these fields of research, bipartite spin systems were introduced to model collective properties of two (or more) interactive large groups of individuals. Although very attractive for their realm of applications, bipartite spin glasses are believed to be very difficult to analyze even non-rigorously. For instance, a Parisi-like theory for these models is still under construction within the physics community, and the models lack many of the predictions that other mean field spin glasses share, e.g., the Sherrington-Kirkpatrick (SK) model. A recent account of the predictions can be found in  \cite{Guerra1} and the references therein.

\smallskip

In this paper, we start a rigorous treatment on one class of disordered bipartite spin systems, known as the mixed $(p,q)$-spin spherical bipartite models. Our results concern about the behaviors of the free energy and the extreme values of the Hamiltonian. First, we obtain a variational formula for the limiting free energy at very high temperature in terms of the well-known Crisanti-Sommers formula for the mixed $p$-spin spherical model \cite{Tal07s, Chen}. Surprisingly, our expression (see Theorem \ref{thm1}) is quite different from what has been expected in the bipartite SK model with Ising spin, where the limiting free energy at high temperature is conjectured  to be a minimax formula proposed in \cite{Guerra1, Barra4}.  Second, we provide estimates for the complexity of local minima at any level of energy. We give bounds on the mean number of local minima of the Hamiltonian whose values are below any threshold. These estimates lead to a bound on the location of the ground state energy and establish that the system has a positive complexity. In the physics literature, models with positive complexity are widely known to show glass to spin-glass phase transitions for instance. However, rigorous results about the complexity are scarce.  We believe that our results are the first evidence that the bipartite models share the same phenomena.

\smallskip

We now introduce the spherical bipartite model as follows. For a positive integer $M$, denote by $S^M$ the $(M-1)$-sphere with radius $M^{1/2}$ and by  $\lambda_M$ the uniform probability measure on $S^{M}.$ For any $x=(x_1,\ldots,x_M)$ and $y=(y_1,\ldots,y_M)$ in $\mathbb{R}^M$, set $$R_M(x,y)=\frac{1}{M}\sum_{i=1}^Mx_iy_i.
$$
Let $N, N_1, N_2$ be positive integers that satisfy $N=N_1+N_2$ and
\begin{equation}\label{def:assumption1}
\left|\frac{N_1}{N}-\gamma\right| =o(N)
\end{equation}
for some $\gamma\in(0,1).$
 For any $p,q\geq 1,$ we set the pure $(p,q)$-spin bipartite Hamiltonian as
\begin{equation*}
H_{N,p,q}(u,v) = \sum_{1 \leq i_1, \ldots, i_p \leq N_1} \sum_{1 \leq j_1, \ldots, j_q \leq N_2} g_{i_1,\ldots i_p, j_1, \ldots, j_q}u_{i_1}\ldots u_{i_p} v_{j_1}\ldots v_{j_q}
\end{equation*}
for $u = (u_1, \ldots, u_{N_1}) \in S^{N_1}$ and $v = (v_1, \ldots, v_{N_2}) \in S^{N_2}$, where  all $g$ variables are centered i.i.d. Gaussian random variables with variance $N/(N_1^{p} N_2^{q})$. A computation reveals that $$\e H_{N,p,q}(u,v)  H_{N,p,q}(u',v') = N(R_{N_1}(u,u'))^{p}(R_{N_2}(v,v'))^{q}.$$
The Hamiltonian of the mixed-spin bipartite spherical model is defined as
\begin{equation*}
\label{eq-7}
H_{N}(u,v)+ h_1 \sum_{i=1}^{N_1} u_i + h_2 \sum_{i=2}^{N_2} v_i,
\end{equation*}
where $H_N$ is the linear combination of $H_{N,p,q},$
$$
H_N(u,v)=\sum_{p,q \geq 1} \beta_{p,q} H_{N,p,q}(u,v).
$$
Here $h_1,h_2\in\mathbb{R}$ denote the strength of the external fields and the nonnegative sequence $(\beta_{p,q})_{p,q\geq 1}$ is usually called the (inverse) temperature parameter that satisfies $\beta_{p,q}>0$ for at least one pair $(p,q)$ and decreases fast enough, for instance, $\sum_{p,q=1}^\infty 2^{p+q}\beta_{p,q}^2<\infty$. This assumption ensures the well-definedness of the model and 
a direct computation gives 
$$
\e H_{N}(u,v)H_N(u',v')=N\xi(R_{N_1}(u,u'), R_{N_2}(v,v'))
$$
for
 \begin{align}\label{caraca}
 \xi(x,y):=\sum_{p,q\geq 1}\beta_{p,q}^2x^py^q,\,\,\forall x,y\in[0,1].
 \end{align}

\subsection{Free energy}

We first devote our attention to understanding the bipartite spherical model under the assumptions that the temperature parameter $\xi(1,1)$ and the external fields $h_1,h_2$ are sufficiently small. Define the Gibbs measure of the bipartite spherical model on $S^{N_1}\times S^{N_2}$ by
$$
dG_N(u,v)=\frac{1}{Z_N}\exp\left(H_N(u,v)+h_1\sum_{i=1}^{N_1}u_i+h_2\sum_{i=1}^{N_2}v_i\right)d(\lambda_{N_1}\times \lambda_{N_2})(u,v),
$$
where the normalizing factor $Z_N$ is called the partition function. For a fixed realization $g,$ we denote by $(u^1,v^2)$ and $(u^2,v^2)$ i.i.d. sampled configurations (replicas) with respect to $G_N$. The overlaps within the first and second parties are defined respectively as
\begin{align*}
R_{1,2}^1&=R_{N_1}(u^1,u^2)=\frac{1}{N_1}\sum_{i=1}^{N_1}u_i^1 u_i^{2}\,\,\mbox{and}\,\,
R_{1,2}^2=R_{N_2}(v^{1},v^{2})=\frac{1}{N_2}\sum_{i=1}^{N_2}v_i^1 v_i^{2}.
\end{align*}
Finally, we set the free energy as 
\begin{align*}
F_N=\frac{1}{N}\e\log Z_N.
\end{align*}

\smallskip

First, we will show that the limiting free energy can be computed through an analogy of the well-known Cransti-Sommers formula in the mixed $p$-spin spherical model. Second, we will show that the overlap within each party is concentrated around a constant with a qualitative error estimate. For notational convenience, throughout this section we will use $M_0,\lambda_0>0$ to stand for very small quantities and $L$ is a universal constant independent of everything. These quantities might be different form each occurrence. Also, as we will only be concerned with very small $\xi(1,1)=\sum_{p,q\geq 1} \beta_{p,q}^2$, we will use the clear fact without mentioning at each time that the partial derivatives of $\xi$ up to the second order are uniformly small as well.

\smallskip

We now state our main results. Define $L:[0,1)^2\rightarrow\mathbb{R}$ by
\begin{align*}
P(a,b)&=\frac{\gamma}{2}\left(h_1^2(1-a)+\frac{a}{1-a}+\log(1-a)+\xi(1,1)-\xi(a,b)\right)\\
&+\frac{1-\gamma}{2}\left(h_2^2(1-b)+\frac{b}{1-b}+\log(1-b)+\xi(1,1)-\xi(a,b)\right).
\end{align*}

\begin{theorem}\label{thm1}
There exists a small $M_0>0$ such that the following statements hold whenever $\xi(1,1),|h_1|,|h_2|<M_0$.
\begin{itemize}
\item[$(i)$] We have that
\begin{align}
\begin{split}\label{eq0}
\lim_{N\rightarrow\infty}F_N&=\min_{a,b\in[0,1)}P(a,b).
\end{split}
\end{align}
Here, the minimum is attained by $(a_0,b_0)\in [0,1)^2$ that is the unique solution to the following system of equations
\begin{align*}
h_1^2+\frac{\partial_x\xi(a_0,b_0)}{\gamma}&=\frac{a_0}{(1-a_0)^2},\,\,h_2^2+\frac{\partial_y\xi(a_0,b_0)}{1-\gamma}=\frac{b_0}{(1-b_0)^2}.
\end{align*}
\item[$(ii)$] Let $\left<\cdot\right>$ denote the expectation with respect to the product measure $G_N\times G_N.$ For the overlaps,
\begin{align}
\begin{split}\label{eq-1}
\e\left<(R_{1,2}^1-a_0)^2\right>,\,\,\e\left<(R_{1,2}^2-b_0)^2\right>&\leq \frac{L\log^2N}{N},
\end{split}
\end{align}
where $L$ is a universal constant independent of everything.
\end{itemize}
\end{theorem}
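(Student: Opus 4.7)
The plan is to establish (i) and (ii) simultaneously via Guerra's replica-symmetric interpolation, using Gaussian concentration together with the strict convexity of $P$ at its minimizer to close the argument against the non-convexity of $\xi$.

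\textbf{Interpolation identity.} For parameters $(a,b)\in[0,1)^2$ to be tuned, introduce
\[
H_t(u,v)=\sqrt{t}\,H_N(u,v)+\sqrt{1-t}\,\bigl(z_1(u)+z_2(v)\bigr),\qquad t\in[0,1],
\]
where $z_1,z_2$ are independent centered Gaussian processes on $S^{N_1}$ and $S^{N_2}$ with linearized covariances $\e z_1(u)z_1(u')=N\partial_x\xi(a,b)\,R_{N_1}(u,u')$ and $\e z_2(v)z_2(v')=N\partial_y\xi(a,b)\,R_{N_2}(v,v')$. Writing $\phi_N(t):=N^{-1}\e\log Z_t$, a Gaussian integration by parts yields
\[
\phi_N'(t)=\tfrac{1}{2}\,C(a,b)-\tfrac{1}{2}\,\e\bigl\langle Q_\xi(R_{1,2}^1,R_{1,2}^2)\bigr\rangle_t,
\]
with $C(a,b):=\xi(1,1)-\xi(a,b)-\partial_x\xi(a,b)(1-a)-\partial_y\xi(a,b)(1-b)$ and first-order Taylor remainder $Q_\xi(x,y):=\xi(x,y)-\xi(a,b)-\partial_x\xi(a,b)(x-a)-\partial_y\xi(a,b)(y-b)$. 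At $t=0$ the partition function factorizes into two independent spherical integrals with random linear external fields, whose large-$N$ asymptotics are classical; choosing $(a,b)=(a_0,b_0)$ and using the saddle-point equations in the theorem, a direct algebraic manipulation shows $\lim_{N\to\infty}\phi_N(0)+\tfrac{1}{2}C(a_0,b_0)=P(a_0,b_0)$. Integrating therefore yields the exact identity
\[
F_N=P(a_0,b_0)-\tfrac{1}{2}\int_0^1\e\langle Q_\xi\rangle_t\,dt+o(1).
\]

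\textbf{Main obstacle: non-convexity of $\xi$.} Unlike the single spherical mixed $p$-spin model, where convexity of $\xi$ implies $Q_\xi\geq 0$ and Guerra's interpolation produces a one-sided bound, the bipartite $\xi(x,y)=\sum\beta_{p,q}^2 x^p y^q$ is not jointly convex on $[0,1]^2$ (already the single cross term $\beta_{1,1}^2 xy$ has indefinite Hessian), so $Q_\xi$ has no definite sign and the interpolation produces only an identity. However, the quantitative estimate
\[
|Q_\xi(x,y)|\leq L\bigl[(x-a_0)^2+(y-b_0)^2\bigr],
\]
with $L$ controlled by the convergent series $\sum(p+q)^2\beta_{p,q}^2$, reduces control of the remainder integral to the Gibbs second moment of the centered overlaps, uniformly in $t$.

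\textbf{Concentration and closing the loop.} For part (ii), I would combine Borell--TIS Gaussian concentration of $N^{-1}\log Z_N$ about $F_N$, of order $\sqrt{\log N/N}$, with the identity $\partial_{h_1}F_N=(N_1/N)\,\e\langle N_1^{-1}\sum_i u_i^1\rangle$ and its two-replica analogue for $\e\langle R_{1,2}^1\rangle$, obtained by perturbing the Hamiltonian by a linear coupling of the magnetization. Strict convexity of $P$ at $(a_0,b_0)$---whose Hessian is dominated by positive diagonal entries of order $\gamma(1-a_0)^{-3}$ and $(1-\gamma)(1-b_0)^{-3}$ with off-diagonals of order $\xi(1,1)$, hence positive-definite when $\xi(1,1)$ is small---allows the standard Talagrand-type convexity argument (discretizing the perturbation on the optimal scale $\sqrt{\log N/N}$) to convert the $\sqrt{\log N/N}$ free-energy fluctuations into the announced $L\log^2 N/N$ bound on $\e\langle(R_{1,2}^j-\cdot)^2\rangle$; the two logarithms arise from squaring the Gaussian deviations and from optimizing over the discretization. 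Extending this concentration estimate to the interpolated models $H_t$ by the same argument and inserting it into the Guerra identity gives $\int_0^1\e\langle|Q_\xi|\rangle_t\,dt=O(\log^2 N/N)=o(1)$, which proves (i); uniqueness of $(a_0,b_0)$ follows from the same strict convexity. The main conceptual difficulty is the apparent intertwining of (i) and (ii): concentration is needed to identify the limit, yet the limit seems needed to locate the point of concentration. This circularity is broken by the observation that the concentration bound in (ii) requires only the strict convexity of the limiting functional $P$---a deterministic object---rather than prior identification of $F_N$ with $\min P$.
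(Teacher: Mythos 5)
Your interpolation identity and the quadratic bound $|Q_\xi(x,y)|\le L\bigl[(x-a_0)^2+(y-b_0)^2\bigr]$ match the paper (the paper uses $|\triangle(x,y)|\le M\max\{|x-a|,|y-b|\}$, which is equivalent up to constants). The divergence, and the gap, is in how you propose to establish the overlap concentration needed to kill the remainder term.

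The paper's mechanism is quite specific: at $t=0$ the interpolated Hamiltonian decouples into two independent single-sphere models with external field only. For such models Panchenko's explicit analysis gives an \emph{exponential} moment bound $\e\langle\exp\lambda_0 N C_{1,2}^2\rangle_0\le L$. This bound is then propagated to all $t\in[0,1]$ via the Latala differential inequality $\frac{d}{dt}\e\langle\exp(\lambda-8tM)NC_{1,2}^2\rangle_t\le 0$, which works precisely because $\lambda_0$ can be taken much larger than $M$ when $\xi(1,1)$ is small. Crucially, the interpolation parameter is not $(a_0,b_0)$ but a Brouwer fixed point $(a_N,b_N)=(\e\langle R_{1,2}^1\rangle_0,\e\langle R_{1,2}^2\rangle_0)$, so that the center of the $t=0$ overlap distribution coincides with the interpolation parameter itself; $|a_N-a_0|\lessim\log^2N/N$ is then an output of Panchenko's estimates, not an input.

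Your replacement of this machinery by ``Borell--TIS $+$ derivatives of $F_N$ in $h$ $+$ Talagrand discretization'' does not close. Three concrete problems. First, Borell--TIS controls $N^{-1}\log Z_N$ about its \emph{own} mean $F_N$; it says nothing about convergence of $F_N$ to a deterministic limit, and the Talagrand discretization step needs a limit $F$ (differentiable at the working point) with a \emph{rate} of convergence $F_N\to F$ to turn into a derivative estimate --- precisely the object you are trying to prove exists. Second, $\partial_{h_1}F_N$ gives the magnetization, not the overlap; to relate a perturbation derivative to $\e\langle R_{1,2}^1\rangle$ one must use a quenched Gaussian field and integrate by parts, and even then one obtains first moments, not the centered second moment $\e\langle(R_{1,2}^1-a_0)^2\rangle$ the theorem requires. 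The ``two-replica analogue'' you invoke is not specified and does not exist as a simple derivative identity. Third, your resolution of the circularity --- ``the concentration bound in (ii) requires only the strict convexity of the limiting functional $P$'' --- is not valid: $P$ is a candidate formula; its convexity is a property of a function you wrote down, and by itself carries no information about the Gibbs measure $G_N$ or about $F_N$ until the link (i) has been established. Finally, even granting (ii) at $t=1$, your representation $F_N=P(a_0,b_0)-\tfrac12\int_0^1\e\langle Q_\xi\rangle_t\,dt+o(1)$ needs overlap concentration for \emph{every} $t\in[0,1]$; you wave at ``the same argument'' but the interpolated measures $G_t$ for $t\in(0,1)$ are genuine bipartite mixed models with unknown limits, so the circularity re-enters at each $t$. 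The $t=0$ decoupling plus the Latala differential inequality is exactly what supplies the uniform-in-$t$ control, and no version of the concentration$+$convexity argument you propose substitutes for it.
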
 

Theorem \ref{thm1} is the high temperature results ($\xi(1,1)$ very small). In \cite{Guerra1, Barra4}, the limiting free energy of the bipartite SK model in the Ising spin case is conjectured to be a variational formula of minimax type. The formula \eqref{eq0} shows that the Cransti-Sommers formula in our bipartite model does not share the minimax characterization. In the case of $h_1=h_2=0$, we clearly see that the overlap within each party is concentrated at $0$ sharing the same behavior as the Haar measure on the product of the two spheres.

\smallskip

The proof of Theorem \ref{thm1} is mainly motivated by a beautiful argument of Latala as presented in \cite{Pan} and Chapter 1 of \cite{Tal11}. His argument originated from the study of the SK model in Ising spin case in the high temperature regime. We found that it works equally well in the bipartite model. To extend Theorem \ref{thm1}$(i)$ to the low temperature case, i.e., $\xi(1,1)$ is very large, a reasonable approach is to adapt the treatments in the study of the spherical SK model \cite{Tal07s}, where the Guerra replica symmetry breaking bound (RSB) has played an essential role in the computation of the limiting free energy. However, an equivalent formulation of the Guerra RSB bound does not seem to hold for any clear reason in the setting of the bipartite model, see Remark \ref{rmk1}. Some new ideas are needed.

\subsection{Complexity of local minima}

Our goal in this section is to compute the complexity of local minima of the model, i.e., the mean number of local minima below a certain level of the function $H_{N}$.  This computation has been carried out in the mixed $p$-spin spherical model in \cite{AB, ABC} and also in other examples of disordered systems (see \cite{FyoLN} and the references therein). Note that as $\xi$ has continuous partial derivatives up to fourth order on $[0,1]^2,$ this guarantees the differentiability of $H_N$ for almost surely all sampled realization $g$ so that we can study the critical points of $H_N$. Since in this section the temperature does not play a role, we will now assume that $\xi(1,1) =1.$
In other words, the Gaussian process $H_N$ has constant variance $N$.

\smallskip

For any $t \in \R$ and $0\leq k \leq N-1$, the complexity function is defined as the random number
$\Crt_{N,k}(t)$ of critical points of index $k$ of the
function $H_{N }$ below level $Nt$. More precisely,
\begin{equation}
  \label{defCritical}
  \Crt_{N,k}(t) = 
  \sum_{(u,v): \nabla H_{N}(u,v) = 0 } 
  \indi\{ H_{N}(u,v) \leq Nt\} \indi\{
    i(\nabla^2 H_{N}(u,v)) = k\},
\end{equation}
where  $\indi \{ E \} = \indi_{\{ E \}}$ denotes the indicator function of the event $E$. In particular, $\Crt_{N,0}(t)$ denotes the number of local minima below level $Nt$. Here $\nabla$ and $\nabla^2$ are the gradient and the Hessian restricted to $S^{N_1} \times S^{N_2}$ and  $i(\nabla^2 H_{N}(u,v))$ is the number of negative eigenvalues of the Hessian $\nabla^2 H_{N}$, called the index of the Hessian at $(u,v)$. Our main result is  lower and upper bounds for the mean complexity of local minima.

\begin{theorem}\label{upperlower} Assume that $\xi(x,y) \neq x y$. There exist continuous functions  $J, K: \mathbb R \rightarrow \mathbb R$ such that
\begin{equation}\label{salt}
\begin{split}
J(t)\leq \lim_{N \rightarrow \infty} \frac{1}{N} \log \e \Crt_{N,0}(t) \leq K(t).
\end{split}
\end{equation}
Furthermore, $(\text{Im } J) \cap (0,\infty) \neq \emptyset$ and $\lim_{t\rightarrow -\infty} K(t) = -\infty$.
\end{theorem}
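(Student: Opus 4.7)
The plan is to follow the blueprint of Auffinger--Ben Arous--Cerny in \cite{ABC} for the mixed $p$-spin spherical model: apply the Kac-Rice formula and reduce to random matrix computations. Since the law of $H_N$ is invariant under the natural $O(N_1)\times O(N_2)$ action on $S^{N_1}\times S^{N_2}$, the Kac-Rice density is point-independent, and one arrives at
$$
\e\Crt_{N,0}(t) = \omega_{N_1,N_2}\,\e\bigl[\,|\!\det\nabla^2 H_N|\,\indi\{H_N\le Nt\}\,\indi\{\nabla^2 H_N \succeq 0\}\,\big|\,\nabla H_N=0\bigr]\,\rho_N(0),
$$
evaluated at any fixed base point $(u^\ast,v^\ast)$; here $\omega_{N_1,N_2}$ is the product of surface areas of the two spheres and $\rho_N$ is the Gaussian density of the gradient at the origin. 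Writing $H_N(u^\ast,v^\ast)=Ns$ and decomposing by the conditional density of this scalar given $\nabla H_N=0$ turns the right-hand side into a one-dimensional integral over $s$ of an expected shifted-determinant indicator.

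Next I would compute the conditional distribution of the Hessian on the product tangent space. Using $\e H_N(u,v) H_N(u',v') = N\xi(R_{N_1}(u,u'),R_{N_2}(v,v'))$ and differentiating in an orthonormal frame at $(u^\ast,v^\ast)$, the Hessian conditional on $\{H_N=Ns,\,\nabla H_N=0\}$ should take the block form
$$
M_N(s)\ =\ \begin{pmatrix} A - \alpha_1 s\,\Id & C \\ C^\top & B - \alpha_2 s\,\Id \end{pmatrix},
$$
with $A$ a rescaled GOE of size $N_1-1$, $B$ a rescaled GOE of size $N_2-1$, $C$ an independent rectangular Gaussian matrix of the appropriate variance, and $\alpha_1,\alpha_2>0$ scalars determined by $\partial_x\xi(1,1)$ and $\partial_y\xi(1,1)$. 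The hypothesis $\xi(x,y)\neq xy$ guarantees that this ensemble is non-degenerate and that the coupling between $A$, $B$ and $C$ is non-trivial.

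For the upper bound I would bound $\Crt_{N,0}(t)\le\Crt_N(t)$ by dropping the positivity indicator, and then combine the Gaussian density in $s$ with
$$
\tfrac{1}{N}\log\e|\!\det M_N(s)|\ \longrightarrow\ \int\log|x|\,d\mu_s(x),
$$
where $\mu_s$ is the limiting spectral measure of $M_N(s)$, identified via block Stieltjes-transform equations of Wigner-type. This produces a continuous variational rate $K(t)=\sup_{s\le t}\Phi(s)$ whose Gaussian prefactor $-(s-s_0)^2/(2\sigma^2)$ forces $K(t)\to -\infty$ as $t\to-\infty$. For the lower bound, I would restrict to values of $s$ so negative that the deterministic shifts $-\alpha_i s\,\Id$ dominate the bounded-in-probability spectral radius of the unshifted block, so that $\Pro(M_N(s)\succeq 0)\to 1$; on that range the lower bound $J(t)$ matches the upper rate up to $o(1)$, and $J$ is then strictly positive at any $t$ for which the supremum defining $\Phi(s)$ is achieved at such an admissible $s$.

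The principal technical obstacle is the random matrix analysis of $M_N(s)$: identifying $\mu_s$, establishing convergence of the mean log-determinant to its logarithmic potential at the correct exponential rate, and controlling the probability that $M_N(s)$ is positive-definite. Unlike the pure spherical case, where everything is inherited directly from a shifted GOE, here the block Wigner ensemble with a Wishart-like off-diagonal coupling is non-standard. A viable route is the Schur-complement identity
$$
\det M_N(s)\ =\ \det(A-\alpha_1 s\,\Id)\cdot\det\bigl((B-\alpha_2 s\,\Id)-C^\top(A-\alpha_1 s\,\Id)^{-1}C\bigr),
$$
valid when the shifts avoid the spectrum of $A$, which reduces each factor to a deformed Wigner or Wishart-type determinant whose large-$N$ behavior is classical; the positive-definiteness estimate then reduces to a shift-versus-largest-eigenvalue comparison for the unshifted block.
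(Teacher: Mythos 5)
Your setup — Kac-Rice via rotation invariance, conditioning on $H(\boldsymbol n)=Ns$, and the block structure of the conditional Hessian $M_N(s)$ with deformed GOE diagonal blocks and a rectangular Gaussian off-diagonal block — is exactly right and matches the paper. But both of your bounding steps have genuine gaps, and the key algebraic ideas that make the paper's proof go through are missing.

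For the upper bound, you drop the positivity indicator and propose to compute $\tfrac{1}{N}\log\e|\det M_N(s)|$ directly from the limiting spectral measure $\mu_s$ of the block ensemble. This requires (i) identifying $\mu_s$, and (ii) upgrading weak convergence of the ESD to an exponential-rate statement for $\e|\det M_N(s)|$, for an ensemble that is neither GOE nor Wishart but a nontrivial coupling of both. The paper explicitly remarks that such exact computations are not available for the bipartite Hessian, and sidesteps them entirely: it keeps the positivity indicator, applies Fischer's inequality $\det\mathcal A \le \det A\cdot\det C$ for positive-definite block matrices to get
$|\det M_N(s)|\indi\{M_N(s)\succeq 0\}\le\prod_{i}|\det (G_i-\cdot)|\indi\{G_i-\cdot\succeq 0\}$,
which is precisely the corresponding quantity for the decoupled Hamiltonian $\tilde H = \mathcal H^1_{N_1}+\mathcal H^2_{N_2}$. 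That reduces the upper bound to the mixed $p$-spin complexity formula of Auffinger--Ben Arous (their Theorem 3), where everything is already known. Dropping the indicator, as you do, is strictly worse and much harder to estimate.

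For the lower bound, the difficulty is worse. Even if $\Pro(M_N(s)\succeq 0)\to 1$ for very negative $s$, you still need to control $\e\bigl[\det M_N(s)\,\indi\{M_N(s)\succeq 0\}\bigr]$ at exponential scale, and your assertion that this "matches the upper rate up to $o(1)$" is not an argument — on the small-probability event where positive-definiteness fails, $|\det M_N(s)|$ could be exponentially larger. The paper's crucial trick, which you do not have, is the weighted split $M_N(s)=H_1(a)+H_2(a)$ (the shift shared between a block-diagonal deformed-GOE part $H_1$ and an off-diagonal-plus-residual-shift part $H_2$, independent of each other), combined with Minkowski's determinant inequality $\det(H_1+H_2)\ge \det H_1+\det H_2$ on the positive cone. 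This yields
$\e\bigl[|\det M_N(s)|\indi\{M_N(s)\succeq 0\}\bigr]\ge\e\bigl[\det H_1\,\indi\{H_1\succeq 0\}\bigr]\Pro(H_2\succeq 0)$,
and then $\e[\det H_1\,\indi\{H_1\succeq 0\}]$ factors over the two blocks and is computed exactly via the ABC GOE-eigenvalue formula, while $\Pro(H_2\succeq 0)\to 1$ is handled by a Marchenko--Pastur edge argument. The free parameter $a\in[0,1]$ is then optimized. Without the Minkowski/decomposition device, you have no way to evaluate the lower bound, and in particular no way to exhibit a $t$ for which $J(t)>0$; the Schur-complement route you float would require a fully quantitative large-deviation analysis of a deformed Wishart determinant, which you do not supply and the paper does not attempt. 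Note also that the paper does \emph{not} claim $J=K$; the two bounds are genuinely different, so your expectation that the lower rate "matches the upper" is in any case not what is proved.
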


The functions $J$ and $K$ above are derived from the analysis of the modified Hamiltonian  obtained by coupling two independent mixed $p$-spin spherical models together, where explicit formulas are available. These functions are defined in  Section \ref{sec:s5} (see \eqref{eq:Psi} and  \eqref{eq:rio}) through certain variational principles. The exact expressions of $J$ and $K$ do not play an important role here. However, these bounds provide essential consequences. On the one hand, the upper bound in \eqref{salt} gives a straightforward bound on the location of the minimum of the Gaussian process $H_N$ as the function $K$ takes both negative and positive values.  Indeed, a simple application of the Markov inequality combined with the theorem above leads to
\begin{corollary} Let $m_0$ be the smallest zero of the function $K(t)$. For any $\epsilon>0$ there exist positive constants $C_1, C_2$ such that
\begin{equation*}
\mathbb{P} (\min H_N \leq N(m_0 -\epsilon))  \leq C_1 \exp(-C_2N).
\end{equation*}
\end{corollary}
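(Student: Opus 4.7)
The plan is a first-moment argument built directly on top of Theorem \ref{upperlower}. First, I would observe that because $S^{N_1}\times S^{N_2}$ is compact and $H_N$ is continuous, the minimum of $H_N$ is attained at some $(u^*,v^*)$, which is automatically a critical point of $H_N$ on the manifold. Since $H_N$ is a smooth centered Gaussian field whose covariance $\xi$ has the regularity already used in Section \ref{sec:s5} for the Kac--Rice setup, a routine Bulinskaya-type argument guarantees that almost surely every critical point is non-degenerate. In particular, the Riemannian Hessian at $(u^*,v^*)$ is strictly positive definite, so its Morse index is zero. Consequently, up to a null event,
$$
\{\min H_N\leq N(m_0-\epsilon)\}\subseteq\{\Crt_{N,0}(m_0-\epsilon)\geq 1\}.
$$

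Next, applying Markov's inequality together with the upper bound in \eqref{salt} gives, for any $\eta>0$ and all sufficiently large $N$,
$$
\Pro\bigl(\min H_N\leq N(m_0-\epsilon)\bigr)\leq \e\,\Crt_{N,0}(m_0-\epsilon)\leq \exp\bigl(N(K(m_0-\epsilon)+\eta)\bigr).
$$
It remains to check that $K(m_0-\epsilon)<0$. Because $K$ is continuous, $K(t)\to-\infty$ as $t\to-\infty$, and $m_0$ is by definition the smallest zero of $K$, the intermediate value theorem rules out $K(t)\geq 0$ anywhere on $(-\infty,m_0)$; hence $K(t)<0$ for every $t<m_0$. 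Choosing $\eta:=-K(m_0-\epsilon)/2$ and $C_2:=-K(m_0-\epsilon)/2$ yields the desired exponential decay for all large $N$, and enlarging $C_1$ absorbs the finitely many small values of $N$ (where the probability is trivially bounded by one).

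The main potential obstacle is actually the very first step, namely justifying the inclusion above: one needs the Hessian at the global minimum to be non-degenerate so that it contributes to $\Crt_{N,0}$ rather than slipping through the strict index condition. This reduces to showing that the conditional law of $\nabla^2 H_N$ given $\nabla H_N=0$ assigns zero mass to singular matrices, which follows from the $C^4$ regularity of $\xi$ on $[0,1]^2$ assumed in the statement. Everything after this reduction is soft: compactness, Markov's inequality, and the qualitative properties of $K$ already encoded in Theorem \ref{upperlower}.
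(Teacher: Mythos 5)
Your proposal is correct and follows the same first-moment argument as the paper's one-line proof, which simply observes that $\min H_N \leq Nt$ if and only if $\Crt_{N,0}(t)\geq 1$, then applies Markov's inequality with the bound $K$. One remark: the non-degeneracy worry you flag is actually superfluous here, since the index is defined as the number of negative eigenvalues of the Hessian and the Riemannian Hessian at a global minimizer on a compact manifold is automatically positive semi-definite, so the global minimum has index $0$ whether or not it is degenerate.
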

\begin{proof}
Just note that $\min H_N \leq  N t$ if and only if $\Crt_{N,0}(t) \geq 1$.
\end{proof}
On the other hand, the lower bound in \eqref{salt} establishes what is known in the physics community as the presence of a positive complexity. Precisely, as the function $J$ takes positive values, Theorem \ref{upperlower} shows an exponentially large mean number of local minima of the energy (even at deep values of $H_N$). Although the result here is about the mean number, it is predicted, at least in the pure case, $\xi(x,y) = x^p y^q$, that $\Crt_{N,0}(t)$ concentrates around its mean. Despite the physicists believe, this self-averaging property is, as far as we know, open in {\it all} disordered models of spin glasses. 

The rest of the paper is organized as follows. In the next section, we provide a proof of Theorem \ref{thm1}. In Section \ref{sec:s4}, we study the complexity of the modified Hamiltonian mentioned above. The results of Section \ref{sec:s4} are of independent interest and are used in the proof of Theorem \ref{upperlower} in Section \ref{sec:s5}. 

\section{Proof of Theorem \ref{thm1}}
For arbitrary $a,b\in [0,1],$ consider the interpolated Hamiltonian $H_t$ for $0\leq t\leq 1,$
\begin{align*}
\begin{split}
H_t(u,v)&=\sqrt{t}H_N(u,v)+\sum_{i=1}^{N_1}u_i\left(\sqrt{1-t}y_i\sqrt{\gamma_N^{-1}\partial_x\xi(a,b)}+h_1\right)\\
&\qquad\qquad\qquad\quad+\sum_{j=1}^{N_2}v_j\left(\sqrt{1-t}z_j\sqrt{(1-\gamma_N)^{-1}\partial_y\xi(a,b)}+h_2\right),
\end{split}
\end{align*}
where $y_1,\ldots,y_{N_1},z_1,\ldots,z_{N_2}$ are i.i.d. standard Gaussian and $\gamma_{N} := {N_1}/{N}$. Let $G_{t}$ and $Z_t$ be the Gibbs measure and partition function associated to the Hamiltonian $H_t$, respectively. We denote by $(u^{\ell},v^{\ell})_{\ell\geq 1}$ a sequence of i.i.d. sampled configurations (replicas) with respect to $G_t$ and for any bounded measurable function $f$ depending on $(u^{\ell},v^{\ell})_{1\leq \ell\leq n}$ for some $n\geq 1$, we use $\left<f\right>_t$ to stand for its Gibbs expectation, i.e.,
\begin{align*}
\left<f\right>_t=\int_{(S^{N_1}\times S^{N_2})^n}f(u^1,v^1,\ldots,u^n,v^n)dG_t(u^1,v^1)\times\cdots\times G_t(u^n,v^n).
\end{align*}
Define overlaps within the first and second parties as
\begin{align*}
R_{\ell,\ell'}^1&=R_{N_1}(u^{\ell},u^{\ell'})=\frac{1}{N_1}\sum_{i=1}^{N_1}u_i^\ell u_i^{\ell'}
\,\,\mbox{and}\,\,
R_{\ell,\ell'}^2=R_{N_2}(v^{\ell},v^{\ell'})=\frac{1}{N_2}\sum_{i=1}^{N_2}v_i^\ell v_i^{\ell'}.
\end{align*}
Set the interpolated free energy
\begin{align*}
\phi(t)&=\frac{1}{N}\e \log Z_t.
\end{align*}

First we want to control the derivative of $\phi(t).$ Using the Gaussian integration by parts, 
\begin{align*}
\phi'(t)&=\frac{1}{2N}\e\left<\frac{H_N(u,v)}{\sqrt{t}}-\frac{\sum_{i=1}^{N_1}u_iy_i\sqrt{\gamma_N^{-1}\partial_x\xi(a,b)}+\sum_{j=1}^{N_2}v_iz_i\sqrt{(1-\gamma_N)^{-1}\partial_y\xi(a,b)}}{\sqrt{1-t}}\right>_t\\
&=-\frac{1}{2}\e\left<\triangle(R_{1,2}^1,R_{1,2}^2)\right>_t+\frac{1}{2}\triangle(1,1),
\end{align*}
where
\begin{align*}
\triangle(x,y)&:=\xi(x,y)-(x\partial_x\xi(a,b)+y\partial_y\xi(a,b))+\theta(a,b)\\
\theta(a,b)&:=a\partial_x\xi(a,b)+b\partial_y\xi(a,b)-\xi(a,b).
\end{align*}

\begin{remark}\label{rmk1}
\rm As the function $\triangle$ may take both positive and negative values, it does not seem so easy to see that the function $\phi'$ takes only one sign. For instance, $\triangle(x,y)=(x-a)(y-b)$ in the simplest case $\xi(x,y)=xy.$ Therefore, it is by no means clear whether we have Guerra's bound, $F_N=\phi(1)\leq \phi(0)$, or not.
\end{remark}

For any $x,y\in[0,1]$, Taylor's formula says that there exist $a',b'\in [0,1]$ such that
\begin{align*}
\xi(x,y)&=\xi(a,b)+\partial_x\xi(a,b)(x-a)+\partial_y\xi(a,b)(y-b)\\
&+\frac{1}{2}\left<\triangledown^2 \xi(a',b')\left[\begin{array}{c}
x-a\\
y-b
\end{array}\right],\left[\begin{array}{c}
x-a\\
y-b
\end{array}\right]\right>\\
&=x\partial_x\xi(a,b)+y\partial_y\xi(a,b)-\theta(a,b)\\
&+\frac{1}{2}\left<\triangledown^2 \xi(a',b')\left[\begin{array}{c}
x-a\\
y-b
\end{array}\right],\left[\begin{array}{c}
x-a\\
y-b
\end{array}\right]\right>.
\end{align*}
Using this and letting $M=\sup_{x,y\in[0,1]}|\triangledown^2 \xi(x,y)|$ and $C(x,y)=\max\{|x-p|,|y-q|\}$ give 
\begin{align}\label{eq2}
|\triangle(x,y)|\leq MC(x,y).
\end{align}
Therefore, to control the derivative of $\Phi(t)$, it suffices to control $\e\left<C(R_{1,2}^1,R_{1,2}^2)\right>_t.$ We will argue by Latala's argument.
For a function $f=f(u^1,v^1,\ldots,u^n,v^n)$, a direct computation using again Gaussian integration by parts gives
\begin{align}
\begin{split}
\label{eq1}
\frac{d}{dt}\e\left<f\right>_t&=N\sum_{1\leq \ell<\ell'\leq n}\e\left<\triangle(R_{\ell,\ell'}^1,R_{\ell,\ell'}^2)f\right>_t-Nn\sum_{\ell\leq n}\e\left<\triangle(R_{\ell,n+1}^1,R_{\ell,n+1}^2)f\right>_t\\
&\,\,+\frac{n(n+1)N}{2}\e\left<\triangle(R_{n+1,n+2}^1,R_{n+1,n+2}^2f)\right>_t.
\end{split}
\end{align}
For any $\ell\neq\ell'$, we define 
$$
C_{\ell,\ell'}=C(R_{\ell,\ell'}^1,R_{\ell,\ell'}^2).
$$
Observe that from H\"{o}lder's inequality and the symmetry between replicas,
\begin{align*}
\e \left<C_{\ell,\ell'}^2\exp \lambda NC_{1,2}^2\right>_t&=\sum_{k=0}^{\infty}\frac{(\lambda N)^{k}}{k!}\e\left<C_{\ell,\ell'}^2C_{1,2}^{2k}\right>_t\\
&\leq \sum_{k=0}^{\infty}\frac{(\lambda N)^{k}}{k!}\left(\e\left<C_{\ell,\ell'}^{2k+2}\right>_t\right)^{1/(k+1)}\cdot\left(\e \left<C_{1,2}^{2k+2}\right>_t\right)^{k/(k+1)}\\
&=\e \left<C_{1,2}^2\exp \lambda NC_{1,2}^2\right>_t.
\end{align*}
Using this inequality, the equation \eqref{eq1} with $n=2$ and $f=\exp \lambda NC_{1,2}^2$ and \eqref{eq2}, we obtain
\begin{align*}
\frac{d}{dt}\e\left<\exp \lambda NC_{1,2}^2\right>_t&\leq 8MN\e\left<C_{1,2}^2\exp \lambda NC_{1,2}^2\right>_t,
\end{align*}
which yields that for $t\leq \lambda/8M,$
\begin{align}\label{eq7}
\frac{d}{dt}\e\left<\exp(\lambda-8tM)NC_{1,2}^2\right>_t&\leq 0.
\end{align}

From now on, for each $N\geq 1,$ we pick $(a,b)=(a_N,a_N)$ such that 
\begin{align*}
a_N&=\e\left<R_{1,2}^1\right>_0,\\
b_N&=\e\left<R_{1,2}^2\right>_0.
\end{align*}
Let us remark that since $(a,b)\mapsto (\e\left<R_{1,2}^1\right>_0,\e\left<R_{1,2}^2\right>_0)$ defines a continuous function on the compact and convex space $[0,1]^2,$ this ensures the existence of such $(a_N,b_N)$ by Brouwer's fixed point theorem. We need the following lemma.

\begin{lemma}
\label{lem1}
There exist small $M_0>0$ and $\lambda_0>0$ such that whenever $\xi(1,1),|h_1|,|h_2|<M_0$, we have
\begin{align}
\label{lem1:eq1}
\e\left<\exp \lambda_0 NC_{1,2}^2\right>_0\leq L,
\end{align}
where $L$ is a constant independent of everything. In addition, the following system of equations has a unique solution $(a_0,b_0)\in [0,1)^2$,
\begin{align}
\begin{split}\label{eq6}
h_1^2+\frac{\partial_x\xi(a_0,b_0)}{\gamma}&=\frac{a_0}{(1-a_0)^2},\,\,
h_2^2+\frac{\partial_y\xi(a_0,b_0)}{1-\gamma}=\frac{b_0}{(1-b_0)^2}
\end{split}
\end{align}
and $(a_0,b_0)$ satisfies 
\begin{align}
\label{eq5}
|a_N-a_0|,|b_N-b_0|\leq\frac{L\log^2 N}{N}.
\end{align}
\end{lemma}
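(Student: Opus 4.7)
The plan is to exploit that at $t=0$ the Hamiltonian decouples as $H_0(u,v) = \langle u, c_1 y + h_1 \mathbf 1\rangle + \langle v, c_2 z + h_2 \mathbf 1\rangle$ with $c_1^2 = \gamma_N^{-1}\partial_x\xi(a,b)$ and $c_2^2 = (1-\gamma_N)^{-1}\partial_y\xi(a,b)$. Thus $G_0 = G_0^{(1)} \otimes G_0^{(2)}$ is a product measure, and $R_{1,2}^1$ and $R_{1,2}^2$ are independent under $G_0\otimes G_0$. Combining $C_{1,2}^2 \leq (R_{1,2}^1 - a_N)^2 + (R_{1,2}^2 - b_N)^2$ with $e^{\lambda \max(x,y)} \leq e^{\lambda x} + e^{\lambda y}$, the bound \eqref{lem1:eq1} reduces to two one-sided estimates of the form $\e\langle \exp \lambda_0 N(R_{1,2}^1 - a_N)^2\rangle_0 \leq L$ (and its $R_{1,2}^2$ analog) under the individual factor measures.

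Each one-sided problem is a spherical Gibbs measure on $S^{N_1}$ (or $S^{N_2}$) with a purely linear external field of total size $O(M_0)$. By rotational invariance of $\lambda_{N_1}$, the joint density of the two-replica cosines $r_k = \langle u^k, \alpha\rangle/(N_1^{1/2}\|\alpha\|)$, where $\alpha = c_1 y + h_1 \mathbf 1$, is proportional to $\exp(N_1^{1/2} \|\alpha\| (r_1 + r_2))\, (1-r_1^2)^{(N_1-3)/2}(1-r_2^2)^{(N_1-3)/2}$, and $R_{1,2}^1 = r_1 r_2 + O(N_1^{-1/2})$. Laplace asymptotics around the unique maximizer of the phase yield exponential concentration with some $\lambda_0 > 0$, and since $\|\alpha\|^2/N_1$ itself concentrates exponentially (as a sum of i.i.d.\ sub-Gaussians), the constant $L$ can be made universal. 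An alternative route is to mimic the Latala-type differential inequality producing \eqref{eq7} in this one-party setting.

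For the algebraic part of the lemma, view \eqref{eq6} as a fixed-point equation $(a,b) = \Psi(a,b)$ with $\Psi_1(a,b) = F^{-1}(h_1^2 + \gamma^{-1}\partial_x\xi(a,b))$ and $\Psi_2(a,b) = F^{-1}(h_2^2 + (1-\gamma)^{-1}\partial_y\xi(a,b))$, where $F(r) = r/(1-r)^2$ is a smooth increasing bijection of $[0,1)$ onto $[0,\infty)$ with $F^{-1}(s) = s - 2s^2 + O(s^3)$ near $0$. For $M_0$ small, $\Psi$ maps a small neighborhood of $(0,0)$ into itself and its Jacobian has operator norm $O(M_0) < 1$, so the Banach fixed-point theorem yields a unique $(a_0,b_0) \in [0,1)^2$, automatically small.

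Finally, to obtain \eqref{eq5}, derive a finite-$N$ analog of \eqref{eq6} that $(a_N,b_N)$ satisfies. Writing $a_N = \|\e\langle u\rangle_0\|^2/N_1$ and using the rotational-invariance formula $\e\langle u\rangle_0 = 2 G'(\|\alpha\|^2)\alpha$ with $G(s) = \log \int_{S^{N_1}} e^{\sqrt s\, u_1}\, d\lambda_{N_1}$, a Taylor/Laplace expansion around $\e[\|\alpha\|^2/N_1] = h_1^2 + \gamma_N^{-1}\partial_x\xi(a_N,b_N)$ produces the identity $a_N = F^{-1}(h_1^2 + \gamma_N^{-1}\partial_x\xi(a_N,b_N)) + \varepsilon_N$. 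The error $\varepsilon_N$ consists of the $O(1/N)$ variance of $\|\alpha\|^2/N_1$, the higher-order Laplace correction controlled by the $L^2$ overlap bound $\e\langle (R_{1,2}^1-a_N)^2\rangle_0 \leq L/(\lambda_0 N)$ obtained above, and the gap $|\gamma_N-\gamma|$, which is $o(1/N)$ by \eqref{def:assumption1}. A Gaussian truncation of the fields $y_i$ at scale $\sqrt{\log N}$ handles rare large-field configurations and brings the total error to $O(\log^2 N/N)$; the contraction of $\Psi$ then transports this to $|a_N-a_0|, |b_N-b_0| = O(\log^2 N/N)$. The main obstacle is the second step: obtaining exponential-in-$N$ concentration of $C_{1,2}^2$ uniformly in the random external field, since everything downstream rests on the $L^2$ corollary of that estimate.
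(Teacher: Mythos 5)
Your proposal follows the same skeleton as the paper's proof: reduce via the product structure of $G_0$ and $e^{\max(x,y)}\le e^x+e^y$ to two single-party spherical models with purely linear Hamiltonians, then establish (i) exponential concentration of $(R_{1,2}^1-a_N)^2$ and $(R_{1,2}^2-b_N)^2$ and (ii) an approximate finite-$N$ fixed-point identity with $O(\log^2N/N)$ error, and finally deduce existence/uniqueness of $(a_0,b_0)$ via a Banach fixed-point argument and transport the error through the smoothness of the defining map. The one real difference is that at the technical core the paper simply cites Panchenko \cite{Pan} (pages 1045--1046, Eq.\ (1.5), and Lemma 7) for precisely these single-party estimates, whereas you sketch a from-scratch derivation via rotational invariance, the density of the replica cosines $r_k$, Laplace asymptotics, sub-exponential concentration of $\|\alpha\|^2/N_1$, and a $\sqrt{\log N}$ truncation of the Gaussian field. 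Your sketch is a faithful outline of what Panchenko actually proves, so there is no genuine gap, but the steps you flag yourself as ``the main obstacle'' --- uniform-in-the-field exponential concentration, and extracting the $O(\log^2N/N)$ error in the approximate fixed-point equation --- are exactly the nontrivial content of the cited reference and would need to be carried out carefully (in particular, $\|\alpha\|^2/N_1$ has chi-square, not Gaussian, tails, which is where the truncation and the extra $\log^2 N$ factor come from). A minor stylistic point: you state both $C_{1,2}^2 \le (R_{1,2}^1-a_N)^2 + (R_{1,2}^2-b_N)^2$ and $e^{\lambda\max(x,y)}\le e^{\lambda x}+e^{\lambda y}$; either one alone suffices and they lead to slightly different (product vs.\ sum) factorizations, so you should pick one. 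Your $\Psi = F^{-1}\circ(\cdots)$ formulation of the fixed-point map is equivalent to the paper's $F=(F_1,F_2)$ contraction; both give uniqueness of $(a_0,b_0)\in[0,1)^2$ for $M_0$ small.
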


\begin{proof}
Note that $e^{\max(x,y)}\leq e^x+e^y$ for any $x,y$. It suffices to show that 
\begin{align*}
\e\left<\exp \lambda N(R_{1,2}^1-a_N)^2\right>_0,\,\,\e\left<\exp \lambda N(R_{1,2}^2-b_N)^2\right>_0\leq L.
\end{align*} 
Since the configuration space in the bipartite model is $S^{N_1}\times S^{N_2}$, $R_{1,2}^1$ depends only on $u^1,u^2\in S^{N_1}$ and $R_{1,2}^2$ depends only on $v^1,v^2\in S^{N_2}$, we have that
\begin{align*}
\e\left<\exp \lambda N(R_{1,2}^1-a_N)^2\right>_0&=\e\left<\exp \lambda N(R_{1,2}^1-a_N)^2\right>_1^{-},\\
\e\left<\exp \lambda N(R_{1,2}^2-b_N)^2\right>_0&=\e\left<\exp \lambda N(R_{1,2}^2-b_N)^2\right>_2^{-},
\end{align*}
where $\left<\cdot\right>_1^-$ and $\left<\cdot\right>_2^-$ stand for the Gibbs expectations associated respectively to the Hamiltonians 
\begin{align}
\begin{split}
\label{eq12}
&\sum_{i=1}^Nu_i\left(y_i\sqrt{\gamma_N^{-1}\partial_x\xi(a_N,b_N)}+h_1\right),\,\,\forall u\in S^{N_1}\\
&\sum_{j=1}^Nv_j\left(z_j\sqrt{(1-\gamma_N)^{-1}\partial_y\xi(a_N,b_N)}+h_2\right),\,\,\forall v\in S^{N_2}.
\end{split}
\end{align}
For Hamiltonians like \eqref{eq12}, they have been studied thoughtfully in Panchenko \cite{Pan}. In particular, one can find that exactly the same argument with only some minor modifications as pages $1045- 1046$ in \cite{Pan} implies that there exists some $M_0>0$ and $\lambda_0>0$ such that whenever $\xi(1,1),|h_1|,|h_2|<M_0$,
\begin{align*}
\e\left<\exp \lambda_0 N(R_{1,2}^1-a_N)^2\right>_{1}^-\leq L,\\
\e\left<\exp \lambda_0 N(R_{1,2}^2-b_N)^2\right>_{2}^-\leq L,
\end{align*}
which gives \eqref{lem1:eq1}. In addition, the equation $(1.5)$ and Lemma 7 in  \cite{Pan} says that there exist
$a_N'$ and $b_N'$ satisfying
\begin{align*}
h_1^2+\frac{\partial_x\xi(a_N',b_N)}{\gamma_N}&=\frac{a_N'}{(1-a_N')^2},\\
h_2^2+\frac{\partial_y\xi(a_N,b_N')}{1-\gamma_N}&=\frac{b_N'}{(1-b_N')^2}
\end{align*}
such that
\begin{align}\label{eq3}
|a_N-a_N'|,|b_N-b_N'|&\leq \frac{L\log^2 N}{N}.
\end{align}
These amount to say that
\begin{align}
\begin{split}\label{eq4}
\left|h_1^2+\frac{\partial_x\xi(a_N,b_N)}{\gamma_N}-\frac{a_N}{(1-a_N)^2}\right| \leq\frac{L\log^2 N}{N},\\
\left|h_2^2+\frac{\partial_y\xi(a_N,b_N)}{1-\gamma_N}-\frac{b_N'}{(1-b_N)^2}\right| \leq\frac{L\log^2 N}{N}.
\end{split}
\end{align}
Define 
\begin{align*}
\begin{split}
F_1(a,b)&=(1-a)^2\left(h_1^2+\frac{\partial_x\xi (a,b)}{\gamma}\right),\\
F_2(a,b)&=(1-b)^2\left(h_2^2+\frac{\partial_y\xi(a,b)}{1-\gamma}\right),\\
F(a,b)&=\left(F_1(a,b),F_2(a,b)\right)
\end{split}
\end{align*}
for $(a,b)\in [0,1]\times[0,1].$ Since $h_1,h_2,\xi(1,1)$ are all small, $F$ maps into itself and a direct computation shows that $\sup_{a,b\in[0,1]}\|DF(a,b)\|<1$. So $F$ forms a contraction mapping and there exists a unique $(a_0,b_0)\in [0,1]^2$ such that $F(a_0,b_0)=(a_0,b_0).$ Note that here none of $a_0$ and $b_0$ could be equal to $1$. To sum up, we obtain \eqref{eq6}. On the other hand, since $G(a,b):=(F_1(a,b)/(1-a)^2,F_2(a,b)/(1-b)^2)$ for $(a,b)\in [0,1)^2$ satisfies
\begin{align*}
DG(a,b)&=\left[
\begin{array}{cc}
\frac{\partial_{xx}\xi(a,b)}{\gamma}-\frac{1+a}{(1-a)^3}&\frac{\partial_{xy}\xi(a,b)}{\gamma}\\
\frac{\partial_{xy}\xi(a,b)}{1-\gamma}&\frac{\partial_{yy}\xi(a,b)}{1-\gamma}-\frac{1+b}{(1-b)^3}
\end{array}\right]
\end{align*}
and $\xi(1,1)$ is small, this ensures that $DG$ is invertible in a small open neighborhood of  $(a_0,b_0)$. This combining with \eqref{eq4} and the assumption that $|\gamma_N-\gamma|\leq K/N$ gives \eqref{eq5}.
\end{proof}

Recall the definition of $M=\max_{x,y\in[0,1]}|\triangledown^2\xi(x,y)|.$ Using this lemma, we take $\xi(1,1)<M_0$ to be small enough such that $16M<\lambda_0.$ Then for any $0\leq t\leq 1,$ $\lambda_0-8Mt\geq 16M-8Mt\geq 8M$ and consequently, from \eqref{eq7},
\begin{align*}
\e\left<\exp 8MNC_{1,2}^2\right>_t&\leq \e\left<\exp (\lambda_0-8Mt)NC_{1,2}^2\right>_t\\
&\leq\e\left<\exp \lambda_0NC_{1,2}^2\right>_0\\
&\leq L.
\end{align*}
In other words, from $x^{2}\leq 2e^{x^2}$ for all $x\in\mathbb{R}$, we obtain that for all $0\leq t\leq 1,$
\begin{align}
\label{eq-2}
\e\left<C_{1,2}^2\right>_t&\leq \frac{L}{N}
\end{align}
and therefore, 
\begin{align}
\label{eq9}
\left|\phi(1)-\phi(0)-\frac{1}{2}\triangle(1,1)\right|\leq\frac{L}{N}.
\end{align}
Now note that $\phi(1)=N^{-1}\e\log Z_N$ and
\begin{align*}
\phi(0)&=\frac{1}{N}\e\log \int_{S^{N_1}}\exp \sum_{i=1}^{N_1}u_i\left(y_i\sqrt{\gamma_N^{-1}\partial_x\xi(a_N,b_N)}+h_1\right)d\lambda_{N_1}\\
&+\frac{1}{N}\e\log \int_{S^{N_2}}\exp \sum_{j=1}^{N_2}v_j\left(z_i\sqrt{(1-\gamma_N)^{-1}\partial_y\xi(a_N,b_N)}+h_2\right)d\lambda_{N_2}
\end{align*}
Since $\lim_{N\rightarrow\infty}a_N=a_0$, $\lim_{N\rightarrow\infty}b_N=b_0$ and $\lim_{N\rightarrow\infty}\gamma_N=\gamma$, these guarantee that
\begin{align}
\begin{split}
\label{eq11}
\lim_{N\rightarrow\infty}\phi(0)&=\gamma\lim_{N\rightarrow\infty}\frac{1}{N_1}\e\log \int_{S^{N_1}}\exp \sum_{i=1}^{N_1}u_i\left(y_i\sqrt{\gamma^{-1}\partial_x\xi(a_0,b_0)}+h_1\right)d\lambda_{N_1}\\
&+(1-\gamma)\lim_{N\rightarrow\infty}\frac{1}{N_2}\e\log \int_{S^{N_2}}\exp \sum_{j=1}^{N_2}v_j\left(z_i\sqrt{(1-\gamma)^{-1}\partial_y\xi(a_0,b_0)}+h_2\right)d\lambda_{N_2}.
\end{split}
\end{align}
Here the two limits on the right-hand side are known as the simplest cases (the Hamiltonians are defined only by the external field) of the limiting free energy for the spherical SK model and are given respectively by the Crisanti-Sommers formula \cite[Theorem 1.1 and Equation (1.11) therein]{Tal07s},
\begin{align}
\begin{split}
\label{eq10}
\frac{1}{2}\inf_{a\in[0,1]}\left\{h_1^2(1-a)+\frac{a}{1-a}+\log(1-a)+\frac{(1-a)\partial_x\xi(a_0,b_0)}{\gamma}\right\},\\
\frac{1}{2}\inf_{b\in[0,1]}\left\{h_2^2(1-b)+\frac{b}{1-b}+\log(1-b)+\frac{(1-b)\partial_y\xi(a_0,b_0)}{1-\gamma}\right\}.
\end{split}
\end{align}
Here the critical point equations in both cases are given by
\begin{align*}
h_1^2+\frac{\partial_x\xi(a_0,b_0)}{\gamma}&=\frac{a}{(1-a)^2},\\
h_2^2+\frac{\partial_y\xi(a_0,b_0)}{1-\gamma}&=\frac{b}{(1-b)^2}.
\end{align*}
Since $(a_0,b_0)$ also satisfies \eqref{eq6}, it follows $(a,b)=(a_0,b_0).$ This combining with \eqref{eq5}, \eqref{eq9}, \eqref{eq11} and \eqref{eq10} yields 
\begin{align*}
\lim_{N\rightarrow\infty}\frac{1}{N}\e\log Z_N&=\frac{\gamma}{2}\left(h_1^2(1-a_0)+\frac{a_0}{1-a_0}+\log(1-a_0)\right)+\frac{(1-a_0)\partial_x\xi(a_0,b_0)}{2}\\
&+\frac{(1-\gamma)}{2}\left(h_2^2(1-b_0)+\frac{b_0}{1-b_0}+\log(1-b_0)\right)+\frac{(1-b_0)\partial_y\xi(a_0,b_0)}{2}\\
&+\frac{1}{2}\left(\xi(1,1)-\xi(a_0,b_0)-(1-a_0)\partial_x\xi(a_0,b_0)-(1-b_0)\partial_y\xi(a_0,b_0)\right)\\
&=P(a_0,b_0).
\end{align*}
Since 
\begin{align*}
\triangledown P(a,b)&=\frac{1}{2}\left(-\gamma h_1^2+\frac{\gamma a}{(1-a)^2}-\partial_x\xi(a,b),-(1-\gamma) h_2^2+\frac{(1-\gamma)b}{(1-b)^2}-\partial_y\xi(a,b)\right),
\end{align*}
it means that $(a_0,b_0)$ is the only critical point of $P$. On the other hand, since
\begin{align*}
\triangledown^2P(a,b)
&=\frac{1}{2}\left(
\begin{array}{cc}
\frac{\gamma(1+a)}{(1-a)^3}-\partial_{xx}\xi(a,b)&-\partial_{xy}\xi(a,b)\\
-\partial_{xy}\xi(a,b)&\frac{(1-\gamma)(1+b)}{(1-b)^3}-\partial_{yy}\xi(a,b)
\end{array}
\right),
\end{align*}
$a,b\in [0,1)$ and $\xi(1,1)$ is very small, we see that $\triangledown^2P(a,b)$ is positive semi-definite on $[0,1)^2.$ Therefore, $(a_0,b_0)$ attains the infimum of $P$. This gives \eqref{eq0}. As for \eqref{eq-1}, it can be obtained from \eqref{eq3} and \eqref{eq-2}.

\section{Complexity of a coupled Hamiltonian}\label{sec:s4}
As the first step to prove Theorem \ref{upperlower}, we will need to investigate the complexity of a coupled Hamiltonian using two independent pure $p$-spin spherical models. We believe the result here is also of interest by itself. Consider two independent mixed-spin spherical Hamiltonians $(\mathcal{H}_{N_1}^1(u):u\in S^{N_1}\}$ and $(\mathcal{H}_{N_2}^2(v):v\in S^{N_2})$ with mean zero and variance,
\begin{align}\label{eq:56}
\e \mathcal H_{N_1}^1(u) \mathcal{H}_{N_1}^1(u')= N_1 \xi^1(R^1(u,u'))\,\,\mbox{and}\,\,\e \mathcal H_{N_2}^2(v) \mathcal{H}_{N_2}^2(v')= N_2 \xi^2(R^2(v,v')),
\end{align} 
where $\xi^1(x):=\sum_{p\geq 1} \beta_p^2 x^{p}$ and $\xi^2(y):=\sum_{q\geq 1}\beta_q'^2 y^{q}.$ As before, we will assume that the sequences $(\beta_p)_{p\geq 1}$ and $(\beta_q')_{q\geq 1}$ decay sufficiently fast to zero so that $\mathcal{H}_{N_1}^1$ and $\mathcal{H}_{N_2}^2$ are smooth almost surely and we assume $\xi_1(1)=1=\xi_2(1).$ The coupled Hamiltonian we will be considering is defined as 
\begin{align}
\label{split}
\tilde{H}_N(u,v)=\mathcal{H}_{N_1}^1(u)+\mathcal{H}_{N_2}^2(v).
\end{align}
For this coupled Hamiltonian we can precisely derive the asymptotic complexity.  We will use $\Crt_{N,k}(t)$ for the variable defined in \eqref{defCritical} with $H_N$ replaced by $\bar H_N$. Our main result  in this direction is the following.

\begin{theorem}\label{thm:easycomplexity} For any $k \in \N\cup\{0\}$, the following limit exists,
\begin{equation*}
\Upsilon_k(t):=\lim_{N \rightarrow \infty} \frac{1}{N} \log \e \Crt_{N,k}(t)< \infty.
\end{equation*}
\end{theorem}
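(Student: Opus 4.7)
The plan is to apply the Kac--Rice formula on the product manifold $S^{N_1}\times S^{N_2}$ and exploit the additive splitting of $\tilde H_N$ together with the independence of $\mathcal H_{N_1}^1$ and $\mathcal H_{N_2}^2$ to reduce the computation to the single mixed $p$-spin spherical complexity. Since $\tilde H_N(u,v)=\mathcal H_{N_1}^1(u)+\mathcal H_{N_2}^2(v)$, the Riemannian gradient splits as $\nabla \tilde H_N=(\nabla \mathcal H_{N_1}^1,\nabla \mathcal H_{N_2}^2)$ and the Riemannian Hessian is block diagonal. Therefore $(u,v)$ is a critical point of $\tilde H_N$ iff each coordinate is critical for the corresponding factor, the Morse index is additive, and $|\det \nabla^2\tilde H_N|=|\det \nabla^2\mathcal H_{N_1}^1|\cdot|\det \nabla^2\mathcal H_{N_2}^2|$. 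Let $\mu_{N_i,k_i}^i$ denote the Kac--Rice intensity measure on $\R$ describing the expected number of critical points of $\mathcal H_{N_i}^i$ of index $k_i$ whose value lies in a Borel set. Independence then yields
\[
\e \Crt_{N,k}(t) = \sum_{k_1+k_2=k}\int\!\!\int \indi\{s_1+s_2\le Nt\}\, d\mu_{N_1,k_1}^1(s_1)\,d\mu_{N_2,k_2}^2(s_2).
\]

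Next I invoke the mixed $p$-spin spherical complexity results of \cite{AB,ABC}, which express $d\mu_{N_i,k_i}^i/ds_i$ via Kac--Rice as a GOE matrix expectation against a Gaussian density and, through a Laplace analysis, identify explicit continuous rate functions $\Theta_{k_i}^i$ satisfying
\[
\frac{1}{N_i}\log \frac{d\mu_{N_i,k_i}^i}{ds_i}(N_i x_i)\longrightarrow \Theta_{k_i}^i(x_i)
\]
locally uniformly in $x_i$, with $\Theta_{k_i}^i(x_i)\to -\infty$ as $x_i\to-\infty$ and a uniform Gaussian-type upper bound controlling the far tails. After rescaling $s_i=N_i x_i$, the coupling constraint $s_1+s_2\le Nt$ becomes $\gamma_N x_1+(1-\gamma_N) x_2\le t$, which by \eqref{def:assumption1} converges to the half-plane $\gamma x_1+(1-\gamma) x_2\le t$.

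Finally, a Laplace/Varadhan argument on the resulting two-dimensional integral should give
\[
\Upsilon_k(t)=\max_{k_1+k_2=k}\;\sup_{\gamma x_1+(1-\gamma) x_2\le t}\bigl\{\gamma\,\Theta_{k_1}^1(x_1)+(1-\gamma)\,\Theta_{k_2}^2(x_2)\bigr\},
\]
which is finite because the constraint intersected with the set where the rates are bounded above is compact, and each $\Theta_{k_i}^i$ is uniformly bounded above on $[-C,C]$ (their global suprema are precisely the total critical-point complexities computed in \cite{AB,ABC}). I expect the main technical obstacle to be upgrading the pointwise logarithmic equivalence of the intensity densities from \cite{AB,ABC} to the locally uniform convergence and the integrable tail domination needed to legitimately interchange the $(1/N)\log$ with the double integral. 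This should be extractable from the explicit structure of those Kac--Rice densities (a Gaussian factor times a GOE expected absolute determinant restricted to a fixed-index event), whose large-$N$ behavior is uniform on compact subsets of the bulk; boundary cases such as $k_i=0$ near the edge of the GOE spectrum require a separate but standard Airy-type truncation argument.
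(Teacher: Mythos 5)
Your proposal is correct and follows essentially the same route as the paper: exploit the additive splitting of $\tilde H_N$ and the block-diagonal Hessian to reduce the Kac--Rice count to a product over the two factors, decompose the index event into pairs $(k_1,k_2)$ with $k_1+k_2=k$, invoke the single-factor complexity asymptotics of \cite{AB,ABC}, and apply Laplace's method. The only cosmetic difference is that you treat the two factors symmetrically via intensity measures $\mu^i_{N_i,k_i}$ and a single joint constraint $s_1+s_2\le Nt$, whereas the paper conditions on the value $\mathcal H^1_{N_1}=N_1x$ and pairs the conditional critical-point density ($\Lambda_l$, with an explicit Gaussian weight $e^{-N_1x^2/2}$) against the cumulative complexity $\Theta_{k-l}$ of the second factor at the shifted threshold; the two formulations are equivalent after the routine substitution $\Theta^{\rm paper}_{k-l}(s)=\sup_{x_2\le s}\Theta^{\rm student}_{k-l}(x_2)$. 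The uniform-in-$x$ convergence you flag as the main technical point is exactly what \cite[Theorem 1.1]{AB} and \cite[Theorem 2.5]{ABC} supply, so the Laplace step goes through as stated in the paper.
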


\begin{proof} For simplicity, we will assume $N_1 = \gamma N$ and $N_2 = (1-\gamma)N$. 
All the arguments below hold under minor changes to \eqref{def:assumption1}. We fix an orthonormal basis $\Omega(u,v)$ of the tangent plane of $S^{N_1} \times S^{N_2}$ at $(u,v)$ such that $\Omega(u,v) = (\Omega_1(u), \Omega_2(v)) =(\Omega_1, \Omega_2)$, where $\Omega_i = \{E^1_i, \ldots, E^{N_i-1}_i \}$ is an orthonormal basis of $S^{N_i}$ at $u$ (or $v$) respectively. We choose $\Omega(u,v)$ continuously on $(u,v)$. In view of \eqref{split}, one can write 
\begin{equation}\label{GradientDecomposition}
 \nabla \tilde H_N(u,v) = (\nabla \mathcal H_{N_1}^1(u), \nabla \mathcal H_{N_2}^2(v)) 
 \end{equation}
and 
\begin{equation} \label{HessianDecomposition}
\nabla^2 \tilde H_N(u,v) =\left( \begin{array}{cc}
\nabla^2 \mathcal H_{N_1}^{1}(u) & 0  \\
0 &  \nabla^2 \mathcal H_{N_2}^{2}(v) \end{array} \right).
\end{equation}
We now write the event $ \{ i(\nabla^2 \tilde H_N(u,v))=k \}$  as the following disjoint union
\begin{equation*}
\left\{ i(\nabla^2 \tilde H_N(u,v))=k \right\} = \bigcup_{l=0}^k \left\{ i(\nabla^2 \mathcal H_{N_1}^{1}(u))=l,  i(\nabla^2 \mathcal H_{N_2}^{2}(v))=k-l \right\} =  \bigcup_{l=0}^k (E_l^1 \cap E_{k-1}^2),
\end{equation*}
which leads to 
\begin{equation*}
\e  \Crt_{N,k}(t) = \sum_{l=0}^{k}  \e\Crt_{N,k}(t) \indi_{E_l^1} \indi_{E_{k-l}^2}.
\end{equation*}
Conditioning on the value of $\mathcal H_{N_1}^1$, each term of the sum above can be written as 
\begin{equation*}
\begin{split}
&\frac{\sqrt{N_1}}{\sqrt{2\pi}}\int_{-\infty}^{\infty} \e \big[ \# \{ (u,v) : \nabla \mathcal H_{N_1}^{1}(u) = 0,\,\,\nabla \mathcal H_{N_2}^{2}(v) =0,\\
&\qquad \mathcal H_{N_2}^2(v) \leq Nt - N_1x, E_l^1, E_{k-l}^2 \} \big | \mathcal H_{N_1}^{1}(u) =N_1x \big]e^{- \frac{N_1x^2}{2}} dx \\
&=\frac{\sqrt{N_1}}{\sqrt{2\pi}} \int_{-\infty}^{\infty} \e [\Crt_{N_2,k-l}^{\mathcal H_{N_2}^2}((1-\gamma)^{-1}(t-\gamma x))] \\
&\qquad\cdot\e \big[ \# \{ u: \nabla \mathcal H_{N_1}^1(u) =0, E_l^1 \} \big | \mathcal H_{N_1}^1(u) = N_1 x \big]  e^{- \frac{N_1x^2}{2}} dx
\end{split}
\end{equation*}
Now from \cite[Theorem 1.1]{AB} (see also \cite[Theorem 2.5]{ABC} for the pure case), there exist functions $\Theta_l$ and $\Lambda_k$ from $\mathbb R$ to $ \mathbb R \cup \{ -\infty \}$, both bounded above and smooth when finite, such that
\begin{align*}
 \e [\Crt_{N_2,k-l}^{\mathcal H_{N_2}^2}((1-\gamma)^{-1}(t-\gamma x))] &\sim \exp(N_2\Theta_{k-l}((1-\gamma)^{-1}(t-\gamma x)) \nonumber \\
 \e \big[ \# \{ u: \nabla \mathcal H_{N_1}^1(u) =0, E_l^1 \} \big | \mathcal H_{N_1}^1(u) = N_1 x \big] &\sim \exp(N_1\Lambda_l(x)) 
 \end{align*}
where $a_n(x) \sim b_n(x)$ means $\lim n^{-1}\log (a_n(x)/b_n(x))=0$ uniformly in $x$.   This allows us to replace the above limits in \eqref{split} to obtain after an application of Laplace's Method the expression
\begin{equation}\label{eq:Psi}
 \Upsilon_k(t) = \sup_{0 \leq l \leq k} \sup_{x \in \mathbb R} \bigg[ (1-\gamma)\Theta_{k-l}\left(\frac{t-\gamma x}{1-\gamma}\right) +\gamma \left(\Lambda_l(x)-\frac{x^{2}}{2} \right)\bigg] .
\end{equation}
\end{proof}
In the case of the pure $p,q$ Hamiltonian, one has for instance the expression (see \cite[Equation (2.16)]{ABC})
\begin{equation*}\begin{split}
\Upsilon_0(t)= \sup_{x \in \mathbb{R}}\bigg \{  &(1-\gamma)\bigg( \frac{1}{2} \log (q-1) - \frac{q-2}{4(q-1)} \frac{(t-\gamma x)^2}{(1-\gamma)^2} - I\big (\frac{t-\gamma x}{1-\gamma} \big) \bigg ) \\ &+\gamma \bigg( \frac{1}{2} \log (p-1) - \frac{p-2}{4(p-1)}  x^2 - I(x)\bigg)    \bigg \}
\end{split}
\end{equation*}
where the function $I$ is defined in \eqref{kkakddddfaso}.

\section{Proof of Theorem \ref{upperlower}} \label{sec:s5}

Recall $\mathcal{H}_{N_1}^1$ and $\mathcal{H}_{N_2}^2$ from \eqref{eq:56} and $\xi(x,y)$ from \eqref{caraca}. From now on, we will choose
\begin{align}
\label{eq-12}
\beta_p^2=\sum_{q\geq 1}\beta_{p,q}^2\,\,\mbox{and}\,\,{\beta_q'}^2=\sum_{p\geq 1}\beta_{p,q}^2
\end{align}
for all $p,q\geq 1.$ To lighten notations, we will simply denote $H_N$ as $H$ and $\tilde{H}_N$ as $\tilde{H}$. The hypothesis on $H$ and $\tilde{H}$ allows us to apply Rice's formula in the form of Lemma 3.1 of \cite{ABC}, which says 
  \begin{align}
\begin{split}\label{e:meta} \e \Crt_{N,k}^{H}(t)=  V_{N_1} V_{N_2} \int_{S^{N_1} \times S^{N_2}}
   \zeta(u,v) \d \lambda_{N_1}\times\lambda_{N_2}(u,v),\\
    \e \Crt_{N,k}^{\tilde{H}}(t)=  V_{N_1} V_{N_2} \int_{S^{N_1} \times S^{N_2}}
      \tilde{\zeta}(u,v) \d \lambda_{N_1}\times\lambda_{N_2}(u,v),
      \end{split}
  \end{align}
where 
\begin{equation*}\label{eq:firststepq2}
V_N = \frac{2(\pi N)^{N/2}}{\Gamma (N/2)}
\end{equation*}
and
\begin{align}
\begin{split}\label{def:zeta}
    \zeta(u,v)&:= \e \big[ | \det \nabla^2H(u,v) |
      \indi\{H(u,v) \leq N t,i(\nabla^2 H(u,v))=k\}\, \big|\,
      \nabla H(u,v) = 0 \big] \phi_{(u,v)}(0), \\
      \tilde{\zeta}(u,v)&:= \e \big[ | \det \nabla^2\tilde{H}(u,v) |
            \indi\{\tilde{H}(u,v) \leq N t,i(\nabla^2 \tilde{H}(u,v))=k\}\, \big|\,
            \nabla \tilde{H}(u,v) = 0 \big] \tilde{\phi}_{(u,v)}(0).
            \end{split}
\end{align} 
Here $\phi_{(u,v)}$ and $\tilde{\phi}_{(u,v)}$ are the joint densities of the gradient vectors of $H$ and $\tilde{H}$ at $(u,v)$, respectively. Due to rotation symmetry of the law of the Hamiltonian $H_N$, we have 

\begin{lemma}
For any $u_1, u_2 \in S^{N_1}$ and $ v_1, v_2 \in S^{N_2}$,  we have $\zeta(u_1,v_1) = \zeta(u_2,v_2)$ and $\tilde{\zeta}(u_1,v_1)=\tilde{\zeta}(u_2,v_2).$
\end{lemma}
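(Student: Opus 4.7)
The plan is to exhibit a joint rotational invariance of the Gaussian processes $H$ and $\tilde H$ under the product group $O(N_1)\times O(N_2)$ acting diagonally on $S^{N_1}\times S^{N_2}$, and then invoke the transitivity of this action on the product of spheres.

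First, I would verify the invariance statement. The covariance of $H_N$ at points $(u,v)$ and $(u',v')$ is $N\xi(R_{N_1}(u,u'),R_{N_2}(v,v'))$, and for any pair of orthogonal matrices $(O_1,O_2)\in O(N_1)\times O(N_2)$ we have $R_{N_i}(O_i u, O_i u')=R_{N_i}(u,u')$. Hence the Gaussian process $(u,v)\mapsto H(O_1 u, O_2 v)$ has the same covariance, and therefore the same law, as $H(u,v)$. The same argument applies to $\tilde H$ using the covariances in \eqref{eq:56} and the decomposition \eqref{split}, since each summand is invariant under rotations of its own sphere.

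Next, I would explain how this invariance transfers to the conditional expectation defining $\zeta$. Given two points $(u_1,v_1)$ and $(u_2,v_2)$, choose $(O_1,O_2)$ with $O_1 u_1=u_2$ and $O_2 v_1=v_2$; this is possible because $O(N_i)$ acts transitively on $S^{N_i}$. Write $\Phi(u,v):=(O_1 u, O_2 v)$ and note that $\Phi$ is an isometry of the product manifold. Picking the orthonormal frame $\Omega(u,v)$ in the tangent space at $(u,v)$ and pushing it forward by $d\Phi$ yields an orthonormal frame at $(u_2,v_2)$, so the gradient and Hessian at $(u_2,v_2)$ of the process $H\circ\Phi$ (expressed in the pushed frame) coincide with the gradient and Hessian of $H$ at $(u_1,v_1)$ (in the original frame). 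In particular the determinant of the Hessian, the index, the value $H(u_2,v_2)=(H\circ\Phi)(u_1,v_1)$, and the density of the gradient at $0$ are all obtained by composition with $\Phi$. Because $H\circ\Phi$ has the same law as $H$, the conditional expectation in \eqref{def:zeta} is unchanged, giving $\zeta(u_1,v_1)=\zeta(u_2,v_2)$. The identical argument, using the block-diagonal structure \eqref{HessianDecomposition} to see that the invariance is preserved factor by factor, yields $\tilde\zeta(u_1,v_1)=\tilde\zeta(u_2,v_2)$.

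The only mildly delicate point is checking that the Hessian, which is an intrinsic object on the sphere, really does transform as claimed under the isometry $\Phi$; this is standard but worth stating because the index and the absolute value of the determinant are features of the Hessian as a self-adjoint operator on the tangent space, and one must know that the pushforward is a conjugation (hence preserves eigenvalues with multiplicities). There is no genuine obstacle beyond this bookkeeping.
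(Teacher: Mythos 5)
Your proposal is correct and follows essentially the same argument as the paper: use the product rotational invariance of the Gaussian law together with transitivity of $O(N_1)\times O(N_2)$ on $S^{N_1}\times S^{N_2}$, then note that the determinant, index, gradient density, and critical-point condition appearing in $\zeta$ and $\tilde\zeta$ are invariant under the choice of orthonormal frame, so they agree after composing with the isometry. The paper phrases the last point slightly differently — observing that the rotated frame $\theta E$ need not coincide with the chosen frame at the image point, but that the relevant quantities are basis-independent — which is precisely the ``bookkeeping'' you flag at the end.
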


\begin{proof}
Let $\theta_1$ and $\theta_2$ be orthogonal transformations of $S^{N_1}$ and $S^{N_2}$ that send $u_1$ to $u_2$ and $v_1$ to $v_2$, respectively. Recall the continuous orthonormal basis $\Omega$ from the proof of Theorem \ref{thm:easycomplexity}. Write $\theta E$ for the image of $\Omega$ under the rotation $(\theta_1, \theta_2)$. Then $\hat H(u,v) := H(\theta_1 u, \theta_2 v )$ has the same law as $H$ since they share the same mean and covariance function. Although it is not necessarily true that $\theta_1 E_i^1(\theta_1 u, \theta_2 v) = E_i^1(\theta_1 u, \theta_2 v)$ or  $\theta_2 E_i^2(\theta_1 u, \theta_2 v) = E_i^2(\theta_1 u, \theta_2 v)$, the value of $|\det \nabla^2 \hat H (u,v) | $ is the same regardless of the choice of the basis at the tangent plane.  Furthermore, the law of $\nabla H(u,v)$ is the same as the law of $ \nabla \hat H(u,v)$ (written in the $\theta E$ coordinates) and being a critical point is independent of the choice of the basis at the tangent plane. These facts establish that $\zeta(u_1,v_1)=\zeta(u_2,v_2)$ and same arguments hold true for $\tilde{\zeta}.$
\end{proof}

By this lemma, it suffices to understand the functions $\zeta$ and $\tilde{\zeta}$ at the ''double north pole''$$\boldsymbol n:= (0,\ldots,0,\sqrt{N_1},0,\ldots,0,\sqrt{N_2}).$$ Consequently, since $\lambda_{N_1}\times\lambda_{N_2}$ is a probability measure, we have 
\begin{equation}
\begin{split}\label{eq:firststepq}
 \e \Crt_{N,k}^H(t) &= V_{N_1}V_{N_2} \zeta(\boldsymbol n),\\
  \e \Crt_{N,k}^{\tilde{H}}(t) &= V_{N_1}V_{N_2} \tilde{\zeta}(\boldsymbol n).
 \end{split}
\end{equation}
To proceed we will need one lemma. It establishes the covariance structure of the vector $(H(\boldsymbol n),~ \nabla H(\boldsymbol n),~\nabla^2 H(\boldsymbol n))$. We use the notation given in \eqref{GradientDecomposition} and \eqref{HessianDecomposition}. Since the difference between $H$ and $\tilde H$ will be mainly present on the non-diagonal blocks of their Hessians, we also include the covariance structure for $(\tilde H(\boldsymbol n), \nabla \tilde H(\boldsymbol n), \nabla^2 \tilde H(\boldsymbol n))$ to emphasize to the reader the difference between the coupled Hamiltonian $\tilde H$ and the Hamiltonian of the bipartite model. Recall the function $\xi(x,y)$ from \eqref{eq-1}. Set
\begin{equation*}\label{eq:constants}
\xi'_1 = \partial_x \xi(1,1),  \xi'_2 = \partial_y \xi(1,1), \xi''_1 = \partial_x^2 \xi(1,1),  \xi''_2 = \partial_y^2 \xi(1,1)
\end{equation*}
and for $i=1,2$,
\begin{equation}\label{def:alpha}
\alpha_i = (\xi_i'' + \xi_i' - \xi_i'^2)^{1/2}.
\end{equation}
Here the quantity in the square root of \eqref{def:alpha} is nonnegative as can be easily verified using $\xi(1,1)=1$ and the Cauchy-Schwarz inequality. So $\alpha_i$ is well-defined. Set $I_1=\{1,2,\ldots,N_1-1\}$ and $I_2=\{1,2,\ldots,N_2-1\}.$

\begin{lemma} \label{l:conditioning}
Suppose that $X=H$ or $\tilde{H}$. Write
\begin{equation*}\label{GradientDecompositionX}
 \nabla X(\boldsymbol n) = (\nabla X^1(\boldsymbol n), X^2(\boldsymbol n)) 
 \end{equation*}
and 
\begin{equation*} \label{HessianDecompositionX}
\nabla^2 X(\boldsymbol n) =\left( \begin{array}{cc}
\nabla^2 X^{11}(\boldsymbol n) & \nabla^2 X^{12}(\boldsymbol n)   \\
\nabla^2 X^{21}(\boldsymbol n) &  \nabla^2 X^{22}(\boldsymbol n) \end{array} \right)
\end{equation*}
as in \eqref{GradientDecomposition} and \eqref{HessianDecomposition}. The following statements hold.

  \begin{enumerate}
  
  \item[$1$] The gradient $\nabla X(\boldsymbol n)$ is independent of the vector $(X(\boldsymbol n), \nabla^2 X(\boldsymbol n)).$ Furthermore, $\nabla X(\boldsymbol n)$ is a vector of $N - 2$ independent Gaussian random variables with variance
  \begin{align}
  \label{eq-13}
   \e[\nabla X_i^1(\boldsymbol n)\nabla X_i^1(\boldsymbol n)]  = \frac{\xi'_1N}{N_1} , \,\,\e[\nabla X_j^2(\boldsymbol n) \nabla X_j^2(\boldsymbol n)]  =  \frac{\xi'_2N}{N_2},\,\, \forall (i,j)\in I_1\times I_2.
  \end{align}

  \item[$2$] The only correlated entries of the matrix $\nabla^2 X^{11}(\boldsymbol n)$ are in the diagonal. Precisely,  for $i,i'\in I_1$ with $i\neq i',$
 \begin{equation*}
 \begin{split}
  \Var[ \nabla^2 X^{11}_{ii}(\boldsymbol n)] &= \frac{N}{N_1^2}  (\xi'_1 + 3 \xi''_1 ),\\
  \Var[ \nabla^2 X^{11}_{ii'}(\boldsymbol n)] &= \frac{N}{N_1^2} \xi''_1, \\
  \e[ \nabla^2 X^{11}_{ii}(\boldsymbol n)\nabla^2 X^{11}_{i'i'}(\boldsymbol n)] &=  \frac{N}{N_1^2} (\xi'_1 + \xi''_1).
    \end{split} 
  \end{equation*}
  
  \item[$3$] The covariance structure of the matrix $\nabla^2 X^{22}(\boldsymbol n)$ is as in the item 2 above replacing $\xi'_1, \xi''_1$ by $\xi'_2$, $\xi''_2$ and $N_1$ by $N_2$.

\item[$4$] The diagonal entries of $\nabla^2 X^{11}(\boldsymbol n)$, $\nabla^2 X^{22}(\boldsymbol n)$ are correlated as
$$   \e[ \nabla^2 X^{11}_{ii}(\boldsymbol n)\nabla^2 X^{22}_{jj}(\boldsymbol n)] =  \frac{N}{N_1N_2}\xi'_1\xi'_2,\,\,\forall (i,j)\in I_1\times I_2.$$ All other correlations between entries of $ \nabla^2 X^{11}(\boldsymbol n)$ and $ \nabla^2 X^{22}(\boldsymbol n)$ are zero.
  
  \item[$5$] The  off-diagonal block matrices $ \nabla^2 X^{12}(\boldsymbol n)$ and $ \nabla^2 X^{21}(\boldsymbol n)$ are independent of both $ \nabla^2 X^{11}(\boldsymbol n)$ and $\nabla^2 X^{22}(\boldsymbol n)$. All their entries are independent and with variance given by: $\forall (i,j)\in I_1\times I_2,$
  
   \begin{equation*}
\Var[ \nabla^2 X^{12}_{ij}(\boldsymbol n)] =\Var[ \nabla^2 X^{21}_{ji}(\boldsymbol n)] = \begin{cases} \frac{N}{N_1N_2}  \xi'_1\xi'_2, &\text{ if } X =H, \\
0, &\text{ if } X = \tilde H.
\end{cases} 
\end{equation*}

  \item[$6$] The off-diagonal entries of $\nabla^2X(\boldsymbol n)$ are independent of $X$. The diagonal terms satisfy 
   \begin{equation*}
\e[ \nabla^2 X^{11}_{ii}(\boldsymbol n)X(\boldsymbol n) ] = - \frac{N}{N_1} \xi'_1, \quad \e[ \nabla^2 X^{22}_{jj}(\boldsymbol n)X(\boldsymbol n) ] = - \frac{N}{N_2} \xi'_2,\,\,\forall (i,j)\in I_1\times I_2.
\end{equation*}

\item[$7$] Conditioning on $ X(\boldsymbol n)=x,$ the off-diagonal entries of the random matrix $\nabla^2 X(\boldsymbol n)$ are independent centered Gaussian random variables. Furthermore, the diagonal entries have mean
  \begin{equation*}
    \mathbb E[\nabla^2 X^{11}_{ii}(\boldsymbol n)]=- \frac{\xi'_1x}{N_1}, \quad  \mathbb E[\nabla^2 X^{22}_{jj}(\boldsymbol n)]=- \frac{\xi'_2x}{N_2},\,\,\forall (i,j)\in I_1\times I_2
  \end{equation*}
  and for $i,i'\in I_1$ and $j,j'\in I_2,$
  \begin{equation*}
  \begin{split}
    \e\big[\nabla^2 X^{11}_{ii}(\boldsymbol n) \nabla^2 X^{11}_{i'i'}(\boldsymbol n)\big]&= \frac{N}{N_1^2} (\xi'_1 + (1+2\delta_{ii'}) \xi''_1  - \xi_1'^2), \quad  \mathbb \Var\big[\nabla^2 X^{11}_{ii'}(\boldsymbol n)\big]= \frac{N}{N_1^2}\xi_1'', \\  
    \e\big[\nabla^2 X^{22}_{jj}(\boldsymbol n) \nabla^2 X^{22}_{j'j'}(\boldsymbol n)\big]&= \frac{N}{N_2^2} (\xi'_2 + (1+2\delta_{jj'}) \xi''_2  - \xi_2'^2), \quad  \mathbb \Var\big[\nabla^2 X^{22}_{jj'}(\boldsymbol n)\big]= \frac{N}{N_2^2}\xi_2'', \\ 
      \mathbb \e\big[\nabla^2 X^{11}_{ii}(\boldsymbol n)\nabla^2 X^{22}_{jj}(\boldsymbol n)\big]&=  \frac{N}{N_1N_2}  \xi'_1\xi'_2,\\
         \mathbb \Var\big[\nabla^2 X^{12}_{ii'}(\boldsymbol n)\big]&=\mathbb \Var\big[\nabla^2 X^{21}_{jj'}(\boldsymbol n)
         \big]= \begin{cases} \frac{N}{N_1N_2},  \xi'_1\xi'_2 &\text{ if } X =H, \\
    0, &\text{ if } X = \tilde H.
        \end{cases}
    \end{split}
  \end{equation*}
\end{enumerate}
\end{lemma}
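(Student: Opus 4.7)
The plan is to pull $X$ back through explicit local charts near $\boldsymbol n$ and read off every covariance by differentiating the covariance function. Let $\varphi_1(\tilde u) = (\tilde u, \sqrt{N_1 - |\tilde u|^2})$ for $\tilde u \in \mathbb{R}^{N_1-1}$ near $0$ parametrize a neighborhood of the north pole $(0,\dots,0,\sqrt{N_1}) \in S^{N_1}$, and define $\varphi_2$ analogously for $S^{N_2}$. A direct calculation shows that at the origin of each chart $\partial_i \varphi_j|_0 = e_i$, the induced metric is $g_{ij}(0) = \delta_{ij}$, and $\partial_k g_{ij}(0) = 0$, so the Christoffel symbols vanish at $0$. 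This identifies our chart basis with the continuous orthonormal frame $\{E_i^1, E_j^2\}$ at $\boldsymbol n$ and guarantees that the intrinsic gradient and Hessian of $\hat X := X \circ (\varphi_1, \varphi_2)$ at the origin are exactly the first and second partial derivatives of $\hat X$ in the chart variables.

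In these coordinates the covariance of $\hat H$ is $K_H = N \xi(R^1, R^2)$ with $R^1(\tilde u, \tilde u') = \frac{1}{N_1}\bigl(\tilde u \cdot \tilde u' + \sqrt{(N_1 - |\tilde u|^2)(N_1 - |\tilde u'|^2)}\bigr)$ and the obvious analog for $R^2$, while the coupled Hamiltonian has the additive covariance $K_{\tilde H} = N_1 \xi^1(R^1) + N_2 \xi^2(R^2)$. I would tabulate once and reuse the derivatives $\partial_{\tilde u_i} R^1|_0 = 0$, $\partial_{\tilde u_i}\partial_{\tilde u_{i'}'} R^1|_0 = \delta_{ii'}/N_1$, $\partial_{\tilde u_i}\partial_{\tilde u_j} R^1|_0 = -\delta_{ij}/N_1$, the third- and fourth-order combinations obtained by one more differentiation of the explicit formula, and the analogous formulas for $R^2$. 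Since $R^1$ has no $\tilde v, \tilde v'$ dependence and $R^2$ has no $\tilde u, \tilde u'$ dependence, any mixed $\tilde u$/$\tilde v$ derivative of a single $R^\ell$ vanishes automatically; this is the source of the additive splitting in the statement and, in particular, of the zero variance in Item 5 for $X = \tilde H$.

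Given these derivative tables, each of Items 1 through 6 reduces to a chain-rule computation of the form $\partial_{\tilde u_\ast} \partial_{\tilde u_\ast'} \partial_{\tilde v_\ast} \partial_{\tilde v_\ast'} K|_0$ (of the appropriate order) in which most monomials vanish because they contain a factor $\partial R^\ell|_0 = 0$, leaving a handful of surviving terms built from $\partial^2 R^\ell|_0$ and derivatives of $\xi$ at $(1,1)$; the independence statements in Items 1 and 5 follow from jointly Gaussian decorrelation implied by these same vanishing mixed derivatives. Item 7 is then the standard Gaussian regression formula: conditional on $X(\boldsymbol n) = x$, each jointly Gaussian quantity $Y$ centered at zero has conditional mean $\e[Y X(\boldsymbol n)] \cdot x / N$ and conditional covariance $\Cov(Y, Y') - \e[Y X(\boldsymbol n)] \e[X(\boldsymbol n) Y']/N$, using $\Var(X(\boldsymbol n)) = N$ from $\xi(1,1) = 1$. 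The main obstacle is purely bookkeeping rather than conceptual: one must carefully enumerate, for each index pattern (diagonal versus off-diagonal block, same-block versus cross-block diagonal coincidences), which chain-rule monomial survives at the origin and which Kronecker-$\delta$ pattern it produces, and then package the coupled and bipartite cases together by noting the additional cross contribution arising from $\partial_x \partial_y \xi(1,1)$ in the bipartite $K_H$ that is absent in the additively split $K_{\tilde H}$.
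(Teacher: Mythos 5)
Your proposal follows essentially the same route as the paper's own proof: pull $X$ back through the projection chart at the double north pole, observe that the induced metric is Euclidean to first order there so the covariant Hessian coincides with the coordinate Hessian, compute the covariance of $\bar X$ and all its derivatives via the Adler–Taylor formula (differentiate the covariance function), and then obtain Item 7 by standard Gaussian regression on $X(\boldsymbol n)$. Your explicit derivative table for $R^1,R^2$ and the observation that additivity of $K_{\tilde H}$ kills all mixed $\tilde u/\tilde v$ derivatives (hence the zero off-diagonal block variance) is exactly the mechanism the paper relies on, so this is the same argument presented in slightly more detail rather than a genuinely different approach.
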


\begin{proof} The proof basically follows from standard computations involving derivatives of smooth Gaussian fields as in  \cite[Lemma 8.5]{Azais}. We will only provide the proof for $X=H$. As for the case $X=\tilde H$, it can be treated in a similar manner by noting that \eqref{eq-12} and the functions $\xi^1(x)=\sum_{p\geq 1}\beta_p^2x^p$ and $\xi^2(x)=\sum_{q\geq 1}{\beta_q'}^2x^q$ appeared in the definition of $\tilde{H}$ satisfy
$$
\xi^i(1)=1,\,\,{\xi^i}'(1)=\xi_i',\,\,{\xi^i}''(1)=\xi_i''
$$
for $i=1,2.$ First, we  define the function 
  $\Psi_k :S^{k} \to \mathbb R^{k-1}$ by 
  $\Psi_k (x_1,\dots,x_k)=(x_1,\dots,x_{k-1})$. The function $\Psi: S^{N_1} \times S^{N_2}\to \mathbb R^{N-2}$ given by 
  $$ \Psi (u,v) = (\Psi_{N_1}u, \Psi_{N_2}v)$$ is a chart in some 
  neighborhood $U$ of $\boldsymbol n$. 
  We set
  \begin{equation*}
    \bar H = H\circ \Psi^{-1},
  \end{equation*}
  which is a Gaussian process on $\Psi (U)$ with covariance
  \begin{equation}\label{eq:covHJ}\begin{split}
    &C((u,v),(w,z))\\
    &=\Cov(\bar H(u,v),\bar H(w,z))\\
    &= \sum_{p,q\geq 1}  \frac{\beta_{p,q}N}{N_1^pN_2^q}
    \bigg\{\sum_{i=1}^{N_1-1} u_iw_i
      +\sqrt{\big(N_1- \textstyle\sum_{i=1}^{N_1-1} u_i^2\big)
        \big(N_1- \sum_{i=1}^{N_1-1} w_i^2\big)}\bigg\}^p \\& 
        \qquad\qquad\qquad \cdot\bigg\{\sum_{i=1}^{N_2-1} v_iz_i
      +\sqrt{\big(N_2- \textstyle\sum_{i=1}^{N-1} v_i^2\big)
        \big(N_2- \sum_{i=1}^{N_2-1} z_i^2\big)}\bigg\}^q.
        \end{split}
  \end{equation}
As in the proof of Theorem \ref{thm:easycomplexity}, we chose the orthogonal basis of the tangent spaces  to coincide with the vector field $E_i, i=1\ldots, N-2$. Then the covariant Hessian $\nabla^2 H(\boldsymbol n)$ coincides with the Euclidean Hessian of
  $\bar H(0,0)$,  by
  noting that the Christoffel symbols $\Gamma^i_{kl}(\boldsymbol n )\equiv 0$.  The covariances of $\bar H, \nabla \bar H, \nabla^2 \bar H$, are computed using a well-known formula \cite[Formula (5.5.5)]{AT07}
$$ \e \bigg\{ \frac{\partial^{\alpha+\beta} \bar H(u,v)}{\partial^\alpha u \partial^\beta v }   \frac{\partial^{\rho+\zeta} \bar H(u,v)}{\partial^\rho u \partial^\zeta v }\bigg\} =  \frac{\partial^{\alpha+\beta+\rho +\zeta} C((u,v), (w,z))}{\partial^\alpha u \partial^\beta v \partial^\rho w \partial^\zeta z },$$  
that relates the covariance of the derivatives of $H$ to the derivatives of the covariance function $C$. Since the derivatives of a centered Gaussian field have centered Gaussian distribution, the covariance matrix determines uniquely the joint distribution of $(H, \nabla H, \nabla^2H)$ and items $1 - 6$ follow from \cite[Formula (5.5.4)]{AT07}
    applied to \eqref{eq:covHJ}.
For instance, item $1$ follows from differentiating $C$  with respect to $u_i$, $w_j$  for all possible values of $i$ and $j$ and evaluating at $u=v=w=z=0$.
 Item $7$ follows from the rules on Gaussian distributions transform under conditioning (see, e.g., \cite[Pages 10-11]{AT07}).
\end{proof}

 Write $I_m$ for the $m \times m$ identity matrix. Item $7$ of the lemma above says that under the law $\mathbb P[\cdot | H(\boldsymbol n)=x]$, the matrix  $\nabla^2H(\boldsymbol n)$ has the same distribution as the random matrix 
 
\begin{align}\label{decomposition1}
 \left( \begin{array}{cc}
G_{1} & G \\
G^t & G_{2}  \\
 \end{array} \right) - \left( \begin{array}{cc}
\frac{\xi_1'x}{N_1}I_{N_1} & 0 \\
0 & \frac{\xi_2' x}{N_2}I_{N_2}   \\
 \end{array} \right).
 \end{align}
Here, $G$ is a $(N_1-1 )\times (N_2-1)$ matrix with independent Gaussian entries with mean zero and variance $N\xi'_1\xi'_2 /(N_1N_2)$ and for $i=1,2$,
\begin{equation}\label{eq:Gi}
G_i \stackrel{d}{=}\left(\frac{N(N_i-1)}{N_i^2}2\xi_i''\right)^{1/2}M^{N_i-1} + \frac{\sqrt{N}}{N_i}\alpha_i Z_i I_{N_i},
\end{equation}
where $\alpha_i$ is defined by \eqref{def:alpha}, $Z_i$'s are independent standard Gaussian random variables and $M^{N_i-1} $ is a $(N_i-1) \times (N_i-1)$ GOE matrix (see \eqref{Goedef} below). To proceed, under the law $\mathbb P[\cdot | H(\boldsymbol n)=x]$, we consider the decomposition $H= H_1 + H_2 $ for $a\in [0,1]$, where $H_1$ and $H_2$ are independent random matrices defined as
\begin{align}
\begin{split}\label{decomposition}
H_1 &= H_1(a)= \left( \begin{array}{cc}
G_{1} & 0 \\
0 & G_{2}  \\
 \end{array} \right) - a\left( \begin{array}{cc}
\frac{\xi_1'x}{N_1}I_{N_1} & 0 \\
0 & \frac{\xi_2' x}{N_2}I_{N_2}   \\
 \end{array} \right),\\
H_2 &= H_2(a)= \left( \begin{array}{cc}
0 & G  \\
G^t & 0  \\
 \end{array} \right) - (1-a)\left( \begin{array}{cc}
\frac{\xi_1'x}{N_1}I_{N_1} & 0 \\
0 & \frac{\xi_2' x}{N_2}I_{N_2}   \\
 \end{array} \right).
 \end{split}
 \end{align}

 \begin{remark}\rm
 This is the part where the computation for $H$ and $\tilde H$ goes in different ways. The reader familiar with the computations performed in \cite{AB} and  \cite{ABC} can foresee that modulo some small technical points the computation for $\tilde H$ follows directly the path taken for the mixed $p$-spin model. The reason is that for $\tilde H$ the matrix $G$ above is identically zero and the Hessian becomes a diagonal block matrix where each block is exactly the Hessian of a mixed $p$-spin model (see the proof of Theorem \ref{thm:easycomplexity}). Therefore, one does not need the matrix $H_2$ in \eqref{decomposition}.
 However, the Hamiltonian $H$ of the bipartite model does not share this property and one needs to deal with the off diagonal block matrix. As far as we know, exact formulas are not available and one cannot perform exact computations. The alternative taken here is to try to bound the (determinant of)  Hessian $H$ by its block terms using the decomposition \eqref{decomposition}, where the constant $a$ plays the weight assigned to each part of the sum. Optimizing over the choice of $a$ will lead to Theorem \ref{upperlower}.
 \end{remark}

Recall the function $\zeta$ defined in \eqref{def:zeta} and consider the case $k=0$. Note that $H(\boldsymbol n)$ is centered Gaussian with variance $N\xi(1,1)=N.$ We first condition on the value of $H$ and use item $1$ of Lemma \ref{l:conditioning} to write

\begin{equation}\label{eq:middlestep}\begin{split}
\zeta(\boldsymbol n) = \frac{\phi_{\boldsymbol n}(0) \sqrt{N}}{\sqrt{2\pi}}  \int_{-\infty}^{t} \e \big[ | \det \nabla^2H(\boldsymbol n) | \indi_{\{ i(\nabla^2 H(\boldsymbol n))=0\}} \, \big|\, H(\boldsymbol n) = N x \big] e^{- \frac{Nx^2}{2}} \d x,\\
\tilde{\zeta}(\boldsymbol n) = \frac{\tilde{\phi}_{\boldsymbol n}(0) \sqrt{N}}{\sqrt{2\pi}}  \int_{-\infty}^{t} \e \big[ | \det \nabla^2\tilde{H}(\boldsymbol n) | \indi_{\{ i(\nabla^2\tilde{ H}(\boldsymbol n))=0\}} \, \big|\, \tilde{H}(\boldsymbol n) = N x \big] e^{- \frac{Nx^2}{2}} \d x,
      \end{split}
\end{equation}
where recalling that $\phi_{\boldsymbol{n}}$ and $\tilde{\phi}_{\boldsymbol{n}}$ are the densities for $H$ and $\tilde{H}$  respectively, item $1$ in Lemma \ref{l:conditioning} implies that they are both centered Gaussian with covariance structures \eqref{eq-13} and thus,
\begin{align}
\begin{split}\label{eq-14}
\phi_{\boldsymbol n}(0) = \tilde\phi_{\boldsymbol n}(0) =\bigg( (2\pi N)^{N-2} \left(\frac{\xi_1'}{N_1}\right)^{N_1-1} \left(\frac{\xi_2'}{N_2}\right)^{N_2-1}\bigg) ^{-1/2}.
\end{split}
\end{align}
The control of $\e \Crt_{N,0}^{H_N}(t)$ from above and below proceeds is discussed in the following two subsections.

\smallskip

\subsection{ Upper bound} Take the decomposition \eqref{decomposition} with $a=1$. Recall that Fischer's inequality (\cite[Fact 8.11.26]{Bern}) says that if $$
\mathcal A = \left( \begin{array}{cc}
A & B  \\
B^t & C  \\
 \end{array} \right)$$ is positive definite, then $\det \mathcal A \leq \det A \det C$. From this, we obtain
 $$| \det \nabla^2H(\boldsymbol n) | \indi_{\{ i(\nabla^2 H(\boldsymbol n))=0\}} \leq \prod_{i=1}^2 | \det \nabla^2H^{ii}(\boldsymbol n) | \indi_{\{ i(\nabla^2 H^{ii}(\boldsymbol n))=0\}}.$$
In view of \eqref{eq:middlestep}, applying item $7$ in Lemma \ref{l:conditioning} for both $H(\boldsymbol{n})$ and $\tilde{H}(\boldsymbol{n})$ and using \eqref{eq-14} lead to
\begin{equation*}\begin{split} 
\zeta(\boldsymbol{n})&\leq   \frac{\phi_{\boldsymbol n}(0) \sqrt{N}}{\sqrt{2\pi}} \int_{-\infty}^t\prod_{i=1}^2\e \left[ \det \left(G_i -\frac{\xi_i'x}{N_i} I_{N_i} \right)\indi_{\left\{ G_i -\frac{\xi_i'x}{N_i} I_{N_i}\geq 0 \right\}} \right]e^{-\frac{x^2}{2N}}dx \\
&=  \frac{\phi_{\boldsymbol n}(0) \sqrt{N}}{\sqrt{2\pi}} \int_{-\infty}^t\e[\det \tilde H_N (\boldsymbol n)  \indi_{\{ \nabla^2 \tilde{H}_N(\boldsymbol{n})\geq 0 \}} \big| \tilde H_N (\boldsymbol n) = Nx ]e^{-\frac{x^2}{2N}}dx\\
&=\tilde{\zeta}(\boldsymbol{n}).
\end{split} 
\end{equation*}
From the definition \eqref{e:meta} and \eqref{def:zeta} of the complexity functions for $H(\boldsymbol{n})$ and $\tilde{H}(\boldsymbol{n})$,
we get $\e \Crt_{N,0}^{H}(t) \leq \e \Crt_{N,0}^{\tilde H}(t)$ for all $N$ and all $t$. From Theorem \ref{thm:easycomplexity}, we obtain the upper bound with $K(t) = \Upsilon_0(t)$. The fact that $\lim_{t \rightarrow -\infty}K(t) = -\infty$ follows from \eqref{eq:Psi} by noting that $\lim_{t\rightarrow -\infty} \Theta_0(t)= -\infty$ and $\Lambda_0$ is bounded above (see \cite[Proposition 1]{AB}).
\smallskip

\subsection{Lower bound} Recall the decomposition $H_1$ and $H_2$ from \eqref{decomposition} by replacing $x$ by $Nx.$ From item $7$ in Lemma \ref{l:conditioning}, under $\Pro[\cdot | H(\boldsymbol{n})=Nx]$, $|\det \nabla^2 H| \indi_{\{ i(\nabla^2 H(\boldsymbol n))=0\}}$ has the same law as $|\det (H_1+H_2)| \indi_{\{ i(\nabla^2 H(\boldsymbol n))=0\}} $. On the other hand,
\begin{equation*}
\begin{split}
|\det (H_1+H_2)| \indi_{\{ i(\nabla^2 H(\boldsymbol n))=0\}}  &\geq  |\det (H_1+H_2)| \indi_{\{ H_1 \geq 0 \}}\indi_{\{ H_2 \geq 0 \}}\\
&\geq (\det H_1 + \det H_2)\indi_{\{ H_1\geq 0 \}}\indi_{\{ H_2\geq 0 \}},
\end{split}
\end{equation*}
where the second inequality used the Minkowski determinant inequality \cite[Corollary 8.4.15]{Bern}.
This implies that 
\begin{equation}\label{eq:mainlower}
\begin{split}
 &\e \big[ | \det \nabla^2H(\boldsymbol n) | \indi_{\{ i(\nabla^2 H(\boldsymbol n))=0\}}\big|\,H(\boldsymbol n) = N x \big] \\
 &\geq \sup_{a \in [0,1]} \e \big[ \det H_1 \indi_{\{ H_1 \geq 0 \} }  \big]\Pro (H_2 \geq 0) \\
 &=\sup_{a \in [0,1]} \left\{\Pro (H_2 \geq 0)\prod_{i=1}^2\e \left[ \det \left(G_i -a\frac{N\xi_i'x}{N_i} I_{N_i} \right)\indi_{\left\{ G_i -a\frac{N\xi_i'x}{N_i} I_{N_i}\geq 0 \right\}} \right] \right\}.
 \end{split}
\end{equation} 
 
 \begin{lemma} \label{lem5} Let $\gamma_*= \max\{\gamma(1-\gamma)^{-1}, \gamma^{-1}(1-\gamma)\}$. If \begin{equation}\label{lem:condition}(1-a)\min\left\{ \frac{\xi_1'x}{\gamma},  \frac{\xi_2' x}{(1-\gamma)}\right\} < - (1+ \sqrt{\gamma_*}),\end{equation} then
$  \lim_{N \rightarrow \infty} \Pro (H_2 \geq 0) = 1.$
\end{lemma}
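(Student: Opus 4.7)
First, since $\xi_1'/\gamma$ and $\xi_2'/(1-\gamma)$ are strictly positive, the hypothesis forces $x<0$. After the prescribed substitution $x \mapsto Nx$, the diagonal shifts $d_i := -(1-a)\xi_i'Nx/N_i$ are strictly positive and converge to $-(1-a)\xi_i'x/\gamma_i > 0$. By Items $5$ and $7$ of Lemma~\ref{l:conditioning}, the off-diagonal block $G$ has i.i.d.\ centered Gaussian entries of variance $N\xi_1'\xi_2'/(N_1 N_2)$, independent of $H_1$. Thus $H_2$ is the structured block matrix
\[
H_2 = \begin{pmatrix} d_1 I_{N_1-1} & G \\ G^t & d_2 I_{N_2-1} \end{pmatrix},
\]
and $\Pro(H_2 \geq 0)$ reduces to a pure spectral question about a random matrix.

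Next, I would derive the positive semidefinite criterion. The nontrivial eigenvalues of $H_2$ satisfy the decoupled quadratic $(\lambda - d_1)(\lambda - d_2) = \sigma_i^2$, where $\sigma_i$ ranges over the singular values of $G$; alternatively, the Schur complement gives $H_2 \geq 0 \iff d_2 I - G^t G/d_1 \geq 0$. Either route yields
\[
\lambda_{\min}(H_2) = \tfrac{1}{2}\bigl(d_1 + d_2 - \sqrt{(d_1-d_2)^2 + 4\|G\|_{\mathrm{op}}^2}\bigr),
\]
so $\{H_2 \geq 0\}$ coincides with the event $\{\|G\|_{\mathrm{op}}^2 \leq d_1 d_2\}$.

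Then I would apply the Bai--Yin (or Geman) theorem for i.i.d.\ Gaussian matrices: the top singular value satisfies $\|G\|_{\mathrm{op}}/(\sqrt{N_1-1}+\sqrt{N_2-1}) \to \sqrt{N\xi_1'\xi_2'/(N_1 N_2)}$ almost surely, so
\[
\|G\|_{\mathrm{op}} \longrightarrow (\sqrt{\gamma}+\sqrt{1-\gamma})\sqrt{\tfrac{\xi_1'\xi_2'}{\gamma(1-\gamma)}}.
\]
Since $G \mapsto \|G\|_{\mathrm{op}}$ is $1$-Lipschitz in the Frobenius norm, standard Gaussian concentration yields exponentially small deviations around this deterministic limit, so in probability $\|G\|_{\mathrm{op}}^2$ is essentially this constant.

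Finally, I would verify that the hypothesis forces $d_1 d_2$ to strictly exceed $\lim \|G\|_{\mathrm{op}}^2$. The key algebraic identity here is $1+\sqrt{\gamma_*} = (\sqrt{\gamma}+\sqrt{1-\gamma})/\sqrt{\min(\gamma, 1-\gamma)}$, which rewrites the threshold of the hypothesis in terms of the Marchenko--Pastur edge that governs $\|G\|_{\mathrm{op}}$. Using also the fixed ratio $d_1/d_2 = \xi_1'(1-\gamma)/(\xi_2'\gamma)$, so that the product $d_1 d_2$ is determined by either $d_i$ individually, the lower bound on $\min\{\xi_1'x/\gamma,\xi_2'x/(1-\gamma)\}$ supplied by the hypothesis translates into a lower bound on $\sqrt{d_1 d_2}$ that beats $\lim \|G\|_{\mathrm{op}}$. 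Combined with the concentration from the previous step, this gives $\Pro(H_2 \geq 0) \to 1$. The main obstacle to carrying out this scheme is precisely the bookkeeping in this last step --- matching the $\gamma_*$-threshold to the random-matrix edge $\sqrt{\gamma}+\sqrt{1-\gamma}$ via the identity above.
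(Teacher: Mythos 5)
Your approach is genuinely different from---and sharper than---the paper's: you compute $\lambda_{\min}(H_2)$ exactly from the singular values of $G$, reducing positive semidefiniteness to the clean criterion $d_1 d_2 \geq \|G\|_{\mathrm{op}}^2$ and then invoking Bai--Yin; the paper instead uses the crude bound $\lambda_{\min}(H_2)\geq\lambda_{\min}(\mathcal{G})+\min_i D_{ii}$ for the block matrix $\mathcal{G}$ with off-diagonal blocks $G,G^t$ together with the Marchenko--Pastur edge of $GG^t$, and finishes with Fatou's lemma. Your spectral identity for $\lambda_{\min}(H_2)$ and the resulting criterion $d_1d_2>\|G\|_{\mathrm{op}}^2$ are correct.

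The gap is in the final step, exactly where you flagged it, and pushing the bookkeeping through shows the hoped-for comparison fails. After Bai--Yin, $\lim d_1 d_2 > \lim\|G\|_{\mathrm{op}}^2$ is equivalent to $(1-a)|x| > \sqrt{\gamma} + \sqrt{1-\gamma}$: the factor $\xi_1'\xi_2'$ in $d_1 d_2$ cancels against the identical factor in the variance of the entries of $G$, so $\xi_1',\xi_2'$ drop out entirely. The lemma's hypothesis, rewritten using the sign of $x$ and your identity $1+\sqrt{\gamma_*}=(\sqrt\gamma+\sqrt{1-\gamma})/\sqrt{\min(\gamma,1-\gamma)}$, only supplies
\[
(1-a)|x| > \frac{\sqrt{\gamma}+\sqrt{1-\gamma}}{\sqrt{\min(\gamma,1-\gamma)}\,\max\{\xi_1'/\gamma,\,\xi_2'/(1-\gamma)\}},
\]
and since $\xi_i'\geq 1$ the extra factor in the denominator is strictly greater than $1$, so this lower bound is strictly smaller than the $\sqrt{\gamma}+\sqrt{1-\gamma}$ you actually need; the fixed-ratio argument does not recover the deficit, because the stray $\xi_i'$ factors do not cancel there. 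Concretely, at $\gamma=1/2$ and $\xi_1'=\xi_2'=1$ the hypothesis gives $(1-a)|x|>1$ while positivity of $H_2$ requires $(1-a)|x|>\sqrt{2}$. This is not a flaw of your spectral analysis but a latent imprecision it exposes: the paper's own variational step writes $\min$ where the bound actually calls for $\max\{N\xi_i'x/N_i\}$, and its Marchenko--Pastur edge statement omits the prefactor $\xi_1'\xi_2'/\max(\gamma,1-\gamma)$. The correct threshold your route produces, $(1-a)|x| > \sqrt\gamma+\sqrt{1-\gamma}$, is independent of $\gamma_*$ and the $\xi_i'$, and adopting it would require adjusting both the hypothesis of the lemma and the constant $a^*$ in Section \ref{sec:s5}.
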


 \begin{proof} Denote by $\lambda_m(A)$ and $\lambda_M(A)$ the minimum and maximum eigenvalues for any symmetry matrix $A$. Note that $H_2$ is positive definite if and only if $\lambda_m(H_2)>0$. Writing $\lambda_{m}(H_2) = \min_{v\neq 0} \< H_2 v, v \>/ \< v, v\>$, this expression implies that $\lambda_{m}(H_2)>0$ if $$\lambda_m(\mathcal{G})> c_N:=(1-a)\min\left\{ \frac{N\xi_1'x}{N_1},  \frac{N\xi_2' x}{N_2}\right\}$$ 
 for $$ \mathcal G:= \left( \begin{array}{cc}
0 & G  \\
G^t & 0  \\
 \end{array}\right).$$
Let $$
c=\lim_{N\rightarrow\infty}c_N=(1-a)\min\left\{ \frac{\xi_1'x}{\gamma},  \frac{\xi_2' x}{(1-\gamma)}\right\}.
$$
It is a well-known fact (see \cite[Exercise 2.1.18]{AGZbook}) that the largest eigenvalue of $GG^t$ converges almost surely to the edge of the Marchenko-Pastur law, which in our normalization is given by $(1 + \sqrt{\gamma_*})^2$. Thus, with probability one,  $\limsup_{N\rightarrow\infty}\lambda_M(GG^t)\leq (1+\sqrt{\gamma^*})^2+\varepsilon$ for any $\varepsilon>0$. Using this and the fact that $\lambda^2$ is an eigenvalue of $GG^t$ for any eigenvalue $\lambda$ of $\mathcal{G}$, we obtain that if $c<-(1+\sqrt{\gamma^*}),$ then 
\begin{align*}
c^2+\varepsilon>(1+\sqrt{\gamma^*})^2+\varepsilon\geq\limsup_{N\rightarrow\infty}\lambda_M(GG^t)\geq \limsup_{N\rightarrow\infty}\lambda_{m}(\mathcal{G})^2
\end{align*}
and letting $\varepsilon\downarrow 0$ gives $c^2>\limsup_{N\rightarrow\infty}\lambda_{m}(\mathcal{G})^2.$
Since $G$ is fromed by i.i.d. standard Gaussians, $\lambda_m(\mathcal{G})$ is almost surely negative. These amount to say that if $c<-(1+\sqrt{\gamma^*}),$ 
$
c< \liminf_{N\rightarrow\infty}\lambda_{m}(\mathcal{G})
$
with probability one.
Using Fatou's lemma, we are done since
\begin{align*}
1&=\p\left(c< \liminf_{N\rightarrow\infty}\lambda_{m}(\mathcal{G})\right)\\
&\leq \int\liminf_{N\rightarrow\infty}1\left\{\lambda_{m}(\mathcal{G})>c_N\right\}d\p\\
&\leq \liminf_{N\rightarrow\infty}\int 1\left\{\lambda_{m}(\mathcal{G})>c_N\right\}d\p\\
&\leq \liminf_{N\rightarrow\infty}\p(H_2\geq 0).
\end{align*}
\end{proof}
This lemma implies that if we choose $x$ and $a$ such that \eqref{lem:condition} holds, then for $N$ large enough $\Pro(H_2 \geq 0)$ is bounded below by a constant $C>0$. Now we need to investigate the term 
$$\prod_{i=1}^2\e \left[ \det \left(G_i -a\frac{N\xi_i'x}{N_i} I_{N_i}\right)\indi_{\left\{ G_i -a\frac{N\xi_1'x}{N_i} I_{N_i}\geq 0 \right\}} \right]$$
for which we will need Lemma 3.3 \cite{ABC}. Its precise statement is given below. Recall that a real symmetry matrix $M^N$ of size $N\times N$ is a Gaussian Orthogonal Ensemble (GOE) if the entries $(M_{ij}^N, i\leq j)$ are 
independent Gaussian random variables with mean zero and variance
\begin{equation}\label{Goedef}
  \mathbb E(M_{ij}^N)^2 = \frac{1+\delta_{ij}}{2N}.
\end{equation}
Denote by  $\e^N_{G}$ the expectation under the GOE of
size $N\times N$.

\begin{lemma}$\cite[\mbox{Lemma}\,\,3.3]{ABC}$
\label{LemmafromGA} Let $M^{N-1}$ be a $(N-1)\times (N-1)$ GOE matrix, $\lambda_k^{(N-1)}$ be the $k$-th smallest eigenvalue of  $M^{N-1}$ and $Y$ be
  an independent Gaussian random variable with mean $m$ and variance $t^2$. Then, for any Borel set $B\subset\mathbb R$,
 \begin{equation}
\begin{split}  \label{eq-15}\e &\Bigg[ \big|\det(M^{N-1}- Y I)\big| \indi\big\{ i(M^{N-1}- Y I)=k, Y \in B\big\} \Bigg]
  \\&= \frac{\Gamma(\frac{N}{2})(N-1)^{-\frac {N}{2}}}{\sqrt{\pi t^2}} \e_G^N \bigg[ \exp\bigg\{\frac{N(\lambda_{k}^{N})^2}{2} - \frac{\big(\left(\frac{N}{N-1}\right)^{\frac {1}{2}}\lambda_{k}^{N}-m\big)^2}{2t^2}\bigg\} \indi\Big\{ \lambda_{k}^{N} \in \left(\tfrac{N-1}{N}\right)^{\frac12}B\Big\} \bigg]. 
        \end{split}
\end{equation}
\end{lemma}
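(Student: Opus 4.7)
The key idea is the classical ``adding an eigenvalue'' trick: the product of absolute values $|\det(M^{N-1}-YI)|=\prod_{i=1}^{N-1}|\mu_i - Y|$, when multiplied against the Vandermonde $\prod_{i<j}|\mu_i-\mu_j|$ from the GOE eigenvalue density, reconstructs the Vandermonde on $N$ points $\{\mu_1,\dots,\mu_{N-1},Y\}$. This turns an $(N{-}1)$-dim GOE expectation into an $N$-dim GOE expectation, with $Y$ occupying one of the $N$ order statistics. The remainder is bookkeeping of Gaussian weights and Jacobians.

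\textbf{Step 1: reduce to eigenvalues.} Condition on $M^{N-1}$ and use the joint density of its ordered eigenvalues $\mu_1\le\dots\le\mu_{N-1}$:
\begin{equation*}
p_{N-1}(\mu)=\frac{1}{Z_{N-1}}\prod_{i<j}|\mu_i-\mu_j|\exp\Bigl(-\tfrac{N-1}{2}\textstyle\sum_i\mu_i^2\Bigr).
\end{equation*}
Since $Y$ is independent with density $(2\pi t^2)^{-1/2}\exp(-(y-m)^2/(2t^2))$, and since the event $\{i(M^{N-1}-yI)=k\}$ is exactly $\{\mu_k<y<\mu_{k+1}\}$, the left-hand side of \eqref{eq-15} becomes
\begin{equation*}
\frac{1}{Z_{N-1}\sqrt{2\pi t^2}}\int_B\!\!\int_{\mu_k<y<\mu_{k+1}} \Bigl(\prod_{i<j}|\mu_i-\mu_j|\Bigr)\!\prod_i|\mu_i-y|\,e^{-\frac{N-1}{2}\sum\mu_i^2}e^{-\frac{(y-m)^2}{2t^2}}d\mu\,dy.
\end{equation*}

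\textbf{Step 2: Vandermonde reconstruction.} Relabel the ordered list $(\mu_1,\dots,\mu_k,y,\mu_{k+1},\dots,\mu_{N-1})$ as $(\lambda_1,\dots,\lambda_N)$ with $\lambda_{k+1}=y$; the indicator that $y$ falls in the correct slot is automatic once we work with ordered $\lambda$. Then $\prod_{i<j\le N-1}|\mu_i-\mu_j|\cdot\prod_i|\mu_i-y|=\prod_{i<j\le N}|\lambda_i-\lambda_j|$.

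\textbf{Step 3: rescale to $N\times N$ GOE normalization.} The Gaussian weight $e^{-\frac{N-1}{2}\sum_{i\ne k+1}\lambda_i^2}$ does not match the $N$-dim GOE weight $e^{-\frac{N}{2}\sum\lambda_i^2}$. Substitute $\tilde\lambda_i=\sqrt{(N-1)/N}\,\lambda_i$ for every $i$ (including $i=k+1$, i.e.\ also substitute $\tilde y=\sqrt{(N-1)/N}\,y$). The Vandermonde picks up $(N/(N-1))^{\binom{N}{2}/2}$, the measure $d\mu\,dy$ picks up $(N/(N-1))^{N/2}$, the set $B$ becomes $\sqrt{(N-1)/N}\,B$, and the Gaussian exponent on the first $N-1$ variables becomes exactly $-\frac{N}{2}\sum_{i\ne k+1}\tilde\lambda_i^2$. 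Multiply and divide by $\exp(-\frac{N}{2}\tilde\lambda_{k+1}^2)$ to complete the $N$-dim GOE weight, producing an excess factor $\exp(\frac{N}{2}\tilde\lambda_{k+1}^2)$. The Gaussian factor for $y$ becomes $\exp\bigl(-(\sqrt{N/(N-1)}\,\tilde\lambda_{k+1}-m)^2/(2t^2)\bigr)$.

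\textbf{Step 4: recognize the $N$-GOE expectation.} The resulting integrand is symmetric in $\tilde\lambda_1,\dots,\tilde\lambda_N$ except for the factor indexed at position $k+1$; writing it against the $N$-dim ordered GOE density recognizes the answer as $Z_N^{-1}Z_N$ times the $\e_G^N$ expectation of the described factor at the $(k{+}1)$-st order statistic. Finally, compute the constant $Z_N/Z_{N-1}$ together with the collected $(N/(N-1))$-powers using the Selberg/Mehta integral; this ratio simplifies to $\Gamma(N/2)(N-1)^{-N/2}/\sqrt{\pi}$ after cancellation, matching the prefactor in \eqref{eq-15}. (The apparent shift $k+1\leftrightarrow k$ between the two sides is a convention about indexing the order statistic; it is the only subtle point and is handled once at the end.)

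\textbf{Main obstacle.} The only genuinely non-routine part is tracking the constants in Step~4: the ratio $Z_N/Z_{N-1}$ of GOE partition functions, combined with the Jacobian factor $(N/(N-1))^{(\binom{N}{2}+N)/2}$, has to telescope cleanly to $\Gamma(N/2)(N-1)^{-N/2}/\sqrt{\pi}$. Beyond this bookkeeping, the proof is entirely algebraic manipulation once the Vandermonde insertion of $Y$ in Step~2 is performed. Since the statement is quoted verbatim from \cite[Lemma 3.3]{ABC}, one may also simply invoke that reference, but the above is the self-contained derivation.
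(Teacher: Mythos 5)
The paper does not prove this lemma; it simply quotes it verbatim and cites \cite[Lemma 3.3]{ABC}, so there is no internal proof to compare against. Your reconstruction follows the standard Vandermonde-insertion argument used in the cited reference: condition on the eigenvalues of the $(N-1)\times(N-1)$ GOE, observe that the index event forces $Y$ to sit between consecutive ordered eigenvalues, absorb $\prod_i|\mu_i - Y|$ into the Vandermonde to obtain the $N$-point Vandermonde, rescale to match the $N$-point GOE normalization, and recognize the resulting integral as a GOE$(N)$ expectation with a residual $\exp(N\lambda_k^2/2)$ weight and a Gaussian factor in the rescaled $k$-th order statistic. The bookkeeping you flag (ratio $Z_N/Z_{N-1}$ combined with the Jacobian powers of $N/(N-1)$, and the $\binom{N}{2}/2$ Vandermonde exponent reflecting $\beta=1$) does telescope to $\Gamma(N/2)(N-1)^{-N/2}/\sqrt{\pi}$ via the Mehta/Selberg formula, and your observation about the $k$ vs.\ $k+1$ indexing is a harmless 0- vs.\ 1-indexing convention (the paper indexes the smallest eigenvalue as $\lambda_0$). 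This is essentially the same derivation that appears in \cite{ABC}, and is correct.
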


\smallskip

Assume first that $\alpha_i \neq 0$ for $i =1,2$. In the notation of Lemma \ref{LemmafromGA}, we take $B=\mathbb R$, $N=N_i$ and $Y$ to be a Gaussian random variable of mean $(N (2\xi_i''(N_i-1))^{-1})^{1/2}a\xi_i'x$ and variance $\alpha_i^2 (2\xi_i''(N_i-1))^{-1}$. Recalling that $N_1/N =\gamma$, \eqref{eq-15} together with \eqref{eq:Gi}  gives us the following lower bound for the right side of \eqref{eq:mainlower},
\begin{equation*}
\begin{split}
\frac{1}{2\pi} \prod_{i=1}^2 \frac{\Gamma(\frac{N_i}{2})  (2\xi_i'')^{\frac{N_i+1}{2}}} {\alpha_i} \bigg (\frac{N}{N_i^2}\bigg)^{\frac{N_1-1}{2}}\e_G^{N_i} \bigg[ \exp\bigg \{ \frac{N_i}{2} \bigg((\lambda_0^{N_i})^2 - \frac{2 \xi_i''}{\alpha_i^2} \bigg(\lambda_0^{N_i} - \frac{a\xi_i' x}{\gamma_i(2\xi_i'')^{\frac{1}{2}}}\bigg)^2\bigg)  \bigg \} \bigg],
 \end{split}
 \end{equation*}
 where $\gamma_1 := \gamma$, $\gamma_2:=1-\gamma$ and $\lambda_0^{N_1}$ and $\lambda_0^{N_2}$ are the smallest eigenvalues of two independent GOE of sizes $N_1$ and $N_2$, respectively. Now combining the equation above with \eqref{eq:firststepq}, \eqref{eq-14}, \eqref{eq:mainlower} and Lemma \ref{lem5}, a straightforward computation leads to the following asymptotics
 \begin{equation}\label{eq:rio}
\lim_{N\to \infty}\frac{1}{N}\log \e \Crt_{N,0}(t) \geq \frac{\gamma}{2} \log\bigg( \frac{\xi_1''}{\xi_1'} \bigg) +  \frac{1-\gamma}{2} \log \bigg( \frac{\xi_2''}{\xi_2'} \bigg) + \sup_{a\in[0,1]} \lim_{N\to \infty}\frac{1}{N}\log \mathcal I_N(t,a),
\end{equation} 
where 
\begin{equation*}
\mathcal I_N(t,a)=\int_{-\infty}^{\min \{t, a^*\}}e^{-\frac{N x^2}{2}} \prod_{i=1}^2 \e_G^{N_i} \bigg[ \exp\bigg \{ \frac{N_i}{2}\bigg((\lambda_0^i)^2 - \frac{2 \xi_i''}{\alpha_i^2} \bigg(\lambda_0^i - \frac{a\xi_i' x}{\gamma_i(2\xi_i'')^{\frac{1}{2}}}\bigg)^2 \bigg)  \bigg \} \bigg] dx
\end{equation*}
with $$a^*= - \frac{1+ \sqrt{\gamma_*}}{(1-a)\min \{ \frac{\xi_1'}{\gamma}, \frac{\xi_2'}{1-\gamma}\}}.$$

The asymptotics of $\mathcal I_N(t,a)$ for $a$ and $t$ fixed and $N$ going to infinity can be derived by an application of Laplace-Varadhan's lemma \cite[Theorem 4.5.1]{DemboZeitouni}.  Indeed, the smallest eigenvalue of the GOE matrix satisfies a LDP with speed $N$ and rate function 
\begin{equation}\label{kkakddddfaso}
I(x) =  \int_{\sqrt{2}}^{|x|} \sqrt{z^2-2}\;  d z = \frac{1}{2} \bigg(|x| \sqrt{x^2-2}+\log 2 -2 \log  \left(|x|+\sqrt{x^2-2}\right) \bigg)
\end{equation}
if $x\leq-\sqrt{2}$ and $I(x) = \infty$ for $x>- \sqrt{2}$ (see \cite[Theorem 6.2]{BenArousGuiDembo}). 
Therefore,
\begin{equation*}
\lim_{N\to \infty} \frac{1}{N} \log \mathcal I_N(t,a) = \sup_{ \stackrel{x\leq \min\{ t, a^*\}}{y_1,y_2 \leq -\sqrt{2}} } \bigg[ -\frac{x^2}{2} + \sum_{i=1}^2 \gamma_i \bigg(\frac {y_i^2}{2} - \frac{\xi_i''}{\alpha_i^2} \bigg(y_i - \frac{a\xi_i' x}{\gamma_i(2\xi_i'')^{\frac{1}{2}}}\bigg)^2- I(y_i)\bigg) \bigg].
\end{equation*}
Denoting the right-hand side of \eqref{eq:rio} as $J(t)$ ends the proof of the lower bound in the case $\alpha_1 ,\alpha_2\neq 0$. To see that $J(t)$ is positive for some $t$, it suffices to take $y_1=y_2=-\sqrt{2}$ and $t\geq a^*=x$ in the equation above. This computation is similar to \cite[Equation (2.15)]{AB}. The case when one (or both) $\alpha_i = 0$ is simpler as one (or both) of the parties becomes simply a pure $p$-spin model and one just needs to apply Lemma \ref{LemmafromGA} with $Y=H_{N}$ as in \cite[Equation (3.25)]{ABC}. We leave the details to the reader. 

\thebibliography{99}

\bibitem{AT07}
R.~J. Adler and J.~E. Taylor (2007) 
\newblock {\em Random Fields and Geometry}.
\newblock Springer Science.

\bibitem{Amit}
D. Amit (1989) {\it Modeling Brain Function}. Cambridge University Press. 

\bibitem{AGZbook} G. Anderson, A. Guionnet and O. Zeitouni (2010) {\it An introduction to random matrices}. Cambridge studies in advanced mathematics, {\bf 118}.

\bibitem{Agliari}
E. Agliari, A. Barra, A. Galluzzi, F. Guerra and F. Moauro (2012) Multitasking associative networks. {\it Phys. Rev. Lett.,} {\bf 109}, 268101.

\bibitem{AB}
A.~Auffinger and G.~Ben Arous (2013)
\newblock Complexity of random smooth functions on the high-dimensional sphere.
\newblock {\em  Ann. of Probab.,} {\bf 41}, no. 6, 4214-4247.

\bibitem{ABC}
A.~Auffinger, G.~Ben Arous, and J.~Cerny (2013)
\newblock Complexity of spin glasses and random matrices.
\newblock {\em Comm. on Pure and Applied Math.}, {\bf 66}, 165-201.

\bibitem{Azais}
J.~Azais, M.~Wschebor (2009)
\newblock {\em Level Sets and Extrema of Random Processes and Fields.}
\newblock  Wiley Publications.

\bibitem{Barra2} A. Barra, E. Agliari (2010) A statistical mechanics approach to autopoietic immune networks. {\it J. Stat. Mech.,} {\bf 07},
07004.

\bibitem{Barra1}A. Barra, P. Contucci (2010) Toward a quantitative approach to migrants integration. {\it Euro. Phys. Lett.,} {\bf 89}, 68001.

\bibitem{Barra0} A. Barra, G. Genovese, F. Guerra and D. Tantari (2012) How glassy are neural networks? {\it J. Stat. Mech.,} P07009.

\bibitem{Guerra1}
A. Barra, A. Galluzzi, F. Guerra, A. Pizzoferrato and D. Tantari (2013) Mean field bipartite spin models treated with mechanical techniques.
 arXiv:1310.5901.

\bibitem{Barra4} A. Barra, G. Genovese and F. Guerra (2011)
Equilibrium statistical mechanics of bipartite spin systems. {\it J. Phys. A: Math. and Theor.,} {\bf 
44}, 245002. 

\bibitem{BenArousGuiDembo} G. Ben Arous, A. Dembo and A. Guionnet (2001) Aging of spherical spin glasses. {\it Probab. Theory and Rel. Fields}, {\bf 120}, 1--67.

\bibitem{Bern} D. S. Bernstein (2005) {\it Matrix Mathematics}. Princeton University Press.

\bibitem{Chen} W.-K. Chen (2013) The Aizenman-Sims-Starr scheme and Parisi formula for mixed p-spin spherical models. {\it Electron. J. Probab.}, {\bf 18}, no. 94, 1-14. 

\bibitem{DemboZeitouni}
A. Dembo and O. Zeitouni (2010) {\it Large deviations techniques and applications}. Stochastic Modelling and Applied Probability, Springer-Verlag.

\bibitem{Dodds} P.S. Dodds, R. Muhamad and D.J. Watts (2003) An Experimental Study of Search in Global Social Networks. {\it Science},
{\bf 301}, 5634.

\bibitem{FyoLN}  Y. Fyodorov (2013) High-Dimensional Random Fields and Random Matrix Theory. arXiv:1307.2379

\bibitem{Martelli} C. Martelli, A. De Martino, E. Marinari, M. Marsili and I. Perez Castillo (2009) Identifying essential genes in E. coli from a metabolic optimization principle. {\it Proc. Natl. Acad. Sc.,} {\bf 106}, 2607.

\bibitem{Pan} D. Panchenko (2009) Cavity method in the spherical SK model. {\it Ann. Inst. Henri Poincaré Probab. Stat.}, {\bf 45}, no. 4, 1020-1047.

\bibitem{ParisiNet} G. Parisi (1990) A simple model for the immune network. {\it Proc. Nat. Acad. Sc.,} {\bf 87}, 2412-2416.

\bibitem{Tal07s} M. Talagrand (2006) Free energy of the spherical mean field model. {\it 
Prob. Theory and Rel. Fields}, {\bf 134}, 339-382. 

\bibitem{Tal11}
M. Talagrand (2011) {\it Mean field models for spin glasses.} Ergebnisse der Mathematik und ihrer Grenzgebiete. 3. Folge. A Series of Modern Surveys in Mathematics, {\bf 54}, Springer-Verlag.

\end{document}